%name: indicator16.tex
%preamble
\documentclass[12pt,reqno]{amsart}
\usepackage{amsthm}
\usepackage{fullpage,url}
\usepackage[off]{auto-pst-pdf}
\usepackage{hyperref} 
\usepackage{amscd}   % for commutative diagrams
\usepackage[all,graph,poly, knot]{xy} % for complicated commutative diagrams
\usepackage{amssymb,amsmath}
\usepackage{pstricks,pst-plot,pst-node}
\usepackage{multicol}
\usepackage{setspace}
\vbadness=10000
\hbadness=10000
\tolerance=10000
\DeclareFontEncoding{OT2}{}{} % to enable usage of cyrillic fonts

%\numberwithin{equation}{subsection}{section}
%new commands
%\newcommand{\}{}
\providecommand{\abs}[1]{\lvert#1\rvert}

\newcommand{\cA}{\mathcal{A}}
\newcommand{\CC}{\mathbb{C}}
\newcommand{\cC}{\mathcal{C}}

\newcommand{\be}{\mathbf{e}}

\newcommand{\op}[1]{\operatorname{#1}}

\newcommand{\QQ}{\mathbb{Q}}

\newcommand{\ZZ}{\mathbb{Z}}

\swapnumbers
\newtheorem{theorem}{Theorem}[section]
\newtheorem{lemma}[theorem]{Lemma}
\newtheorem{corollary}[theorem]{Corollary}

\theoremstyle{definition}

\theoremstyle{remark}

\setcounter{MaxMatrixCols}{14}
\newtheoremstyle{head}% name
{}% Space above
{}% Space below
{\bfseries}% Body font
{}% Indent amount (empty = no indent, \parindent = para indent)
{}% Thm head font
{}% Punctuation after thm head
{.5em}% Space after thm head: " " = normal interword space;
% \newline = linebreak
{}% Thm head spec (can be left empty, meaning `normal')

\theoremstyle{head}

\newtheoremstyle{citing}% name
  {3pt}%      Space above, empty = `usual value'
  {3pt}%      Space below
  {\itshape}% Body font
  {}%         Indent amount (empty = no indent, \parindent = para indent)
  {\bfseries}% Thm head font
  {:}%        Punctuation after thm head
  {.5em}%     Space after thm head: " " = normal interword space;
        %       \newline = linebreak
  {\thmnote{#3}}% Thm head spec

\theoremstyle{citing}
% all text supplied in the note
%text
%
%********************************************************************************
\begin{document}
\title[]{Indicators of Tambara-Yamagami categories and Gauss sums}
\author{Tathagata Basak}
\address{Department of Mathematics\\Iowa State University \\Ames, IA 50011}
\email{tathagat@iastate.edu}
\urladdr{}
\author{Ryan Johnson}
\address{Department of Mathematics\\Grace College \\Winona Lake, IN, 46590}
\email{johnsor@grace.edu}
\urladdr{}
\keywords{fusion category, Tambara-Yamagami category, Frobenius-Schur indicators, discriminant forms, quadratic forms, Gauss sums}
\subjclass[2010]{18D10, 15A63, 11L05, 57M27}
%
% 18D10 Monoidal categories
% 15A63 Quadratic and bilinear forms, inner products
% 11L05 Gauss and Kloosterman sums
% 57M27 Invariants of knots and 3-manifolds
%
\date{July 24, 2014}
\begin{abstract} 
We prove that the higher Frobenius-Schur indicators, introduced by
Ng and Schauenburg, give a strong enough invariant to distinguish between any two
Tambara-Yamagami fusion categories.
Our proofs are based on computation of the higher indicators in terms of
Gauss sums for certain quadratic forms on finite abelian groups and rely on the 
classification of quadratic forms on finite abelian groups, due to Wall.
\par
As a corollary to our work, we show that the state-sum invariants of  a Tambara-Yamagami category
determine the category as long as we restrict to Tambara-Yamagami categories coming from groups $G$
whose order is not a power of $2$. Turaev and Vainerman proved this result under the
assumption that $G$ has odd order and they conjectured that a similar result should hold
for groups of even order. 
We also give an example to show that the assumption that $\abs{G}$ is not a power of 2, cannot be
completely relaxed. 
\end{abstract}
\maketitle
% 
%********************************************************************************
%
%
%********************************************************************************
%********************************************************************************
%
\section{Introduction}
\label{section-introduction}
%
%********************************************************************************
%********************************************************************************
%
Fusion categories (see \cite{ENO:OFC}) occur in various branches of mathematics:
low dimensional topology, subfactors, and quantum groups, to name a few. Classification of 
fusion categories, although currently out of reach in general, is a main driving question in the area.
A natural method for classifying objects in mathematics is via numerical invariants.
In \cite{NS:FS}, Ng and Schauenburg introduced a class of invariants of spherical pivotal fusion categories
(to be simply called spherical categories) called the higher Frobenius-Schur indicators.
Let $\cC$ denote a spherical category.
For each simple object $V$ of $\cC$ and each integer $k \geq 1$, Ng and Schauenburg define a
complex number $\nu_k(V)$, called the $k$-th indicator of $V$. These build on and generalize
many previous works, e.g. \cite{Ba:FS}, \cite{FGSV:S}, \cite{FS:C}, \cite{KSZ:H}, \cite{LM:FS}, \cite{MN:C};   
we refer the reader to the introduction of \cite{NS:FS} for more details.
For $k = 2$, these invariants generalize the classical Frobenius-Schur indicator of a finite group
representation.
The Frobenius-Schur indicators of the simple objects of $\cC$ can be used to define
the Frobenius-Schur exponent of $\cC$, denoted $\op{FSexp}(\cC)$. When $\cC$ is the
representation category of a quasi-Hopf algebra, $\op{FSexp}(\cC)$ is equal to 
$\exp(\cC)$ or $2 \exp(\cC)$ (\cite{NS:FSE}, theorem 6.2) where
$\exp(\cC)$ denotes the exponent of $\cC$ in the sense of Etingof et.al. (see \cite{E:V} and its references).
\par
The higher indicators are powerful tools for studying pivotal categories.
For example, they were used in \cite{NS:CS} to prove that the projective
representation of $\op{SL}_2(\ZZ)$ obtained from a modular tensor category
factors through a finite quotient $\op{SL}_2(\ZZ/n \ZZ)$ for some $n$. 
In this article we demonstrate that the numbers $ \nu_k( V)$, as $k$ varies
over natural numbers and $V$ varies over the set of simple objects of $\cC$,
give a strong enough numerical invariant of $\cC$ that is able to distinguish
between  any two spherical categories in an interesting class, 
known as Tambara-Yamagami categories ($\op{TY}$-categories for short).
\par
Before introducing the $\op{TY}$-categories and stating our theorem
precisely, we want to put our results in context.
Susan Montgomery  has asked whether the FS-indicators of a semsimple
Hopf algebra determine the tensor category of its representations. 
This was shown to be true for the class of semisimple Hopf algebras of dimension
$8$ in \cite{NS:CI}. The representation categories of these Hopf algebras are $\op{TY}$-categories.
In \cite{KMN:T} it was shown that for the class of non-semisimple Hopf algebras called Taft algebras,
the second indicator can distinguish between the finite tensor categories of their representations.
Along similar lines, Siu-Hung Ng has asked whether a spherical fusion category generated by a
simple object is completely determined by it FS-indicators
(Siu-Hung Ng, private communications). 
Our results give an affirmative answer to this question for the class of $\op{TY}$-categories. 
\par
Let $G$ be a finite group. Let $S$ be a finite set which contains $G$ and one extra element, denoted $m$.
Consider the following fusion rule on $S$: 
\begin{equation*}
g \otimes h = gh,  \; \; m \otimes g = g \otimes m = m, \; \; m \otimes m = \bigoplus_{x \in G} x \; 
\text{ \; for all \;}   g, h \in G.
\end{equation*}
In \cite{TY:TC}, Tambara and Yamagami classified all fusion categories that have the above fusion rule;
for a conceptual proof of this classification see \cite{ENO:FCH}, example 9.4.
Such fusion categories exist only if $G$ is abelian and are classified by pairs $(\chi, \tau)$ where
$\chi: G \times G \to \CC^*$ is a non-degenerate symmetric bicharacter on $G$ and
$\tau$ is a square root of $\abs{G}^{-1}$.
For each tuple $(G, \chi, \tau)$ as above, there exists a spherical  category, denoted
$\op{TY}(G, \chi, \tau)$. Two $\op{TY}$-categories $\cC = \op{TY}(G, \chi, \tau)$
and $\cC' = \op{TY}(G', \chi', \tau')$ are isomorphic as spherical categories if and only if $\tau = \tau'$ and
$(G, \chi ) \simeq (G', \chi')$, that is, there exists an isomorphism $f: G \to G'$ such that
$\chi'( f(x), f(y)) = \chi(x, y)$ for all $x, y \in G$. Let $\op{Irr}(\cC) = G \cup \lbrace m_{\cC} \rbrace$ be the simple objects of $\cC$.
There is a canonical (spherical) pivotal structure on $\cC$ such that the pivotal dimension 
of an object matches the Frobenius-Perron dimension. 
For an object $V$ of $\cC$, let $\op{pdim}(V)$ denote its pivotal dimension for this canonical 
pivotal structure.
We shall prove the following theorem:
\begin{theorem}
Let $\cC$ and $\cC'$ be two $\op{TY}$-categories.  If 
\begin{equation*}
\sum_{V \in \op{Irr}(\cC)} \nu_k(V) = \sum_{V \in \op{Irr}(\cC')} \nu_k(V) \text{\; and \;}
\sum_{V \in \op{Irr}(\cC)} \op{pdim}(V) \nu_k(V) = \sum_{V \in \op{Irr}(\cC')}  \op{pdim}(V) \nu_k(V)
\end{equation*}
for all $k \geq 1$, then
$\cC \simeq \cC'$ as spherical fusion categories.
\label{t-indicators-determine-b}
\end{theorem}
Now we shall describe our plan for the proof of this theorem and give a summary of contents
of the sections. 
Let $\cC = \op{TY}(G, \chi, \tau)$ and $\cC' = \op{TY}(G', \chi', \tau')$ be two $\op{TY}$-categories.
Assuming $G$ and $G'$ are non-trivial groups, the assumptions theorem \ref{t-indicators-determine-b}
are quickly seen to be equivalent to
$\nu_k(m_{\cC} ) = \nu_k( m_{\cC'})$ and $\sum_{x  \in G} \nu_k(x) = \sum_{x \in G'} \nu_k(x)$.
Based on work done in \cite{S:FSI}, we can easily conclude that $G \simeq G'$ and  $\tau = \tau'$. 
Most of our work goes into showing that if $\nu_k(m_{\cC}) = \nu_k(m_{\cC'})$ for all $k$, then
$(G, \chi) \simeq (G, \chi')$.
Shimizu calculated $\nu_k(m_{\cC})$ (see \cite{S:FSI}, theorem 3.3, 3.4)
using an expression for the indicator in terms of the twist of the Drinfeld center of $\cC$
(\cite{NS:FSE} theorem 4.1).
This project started for us when Siu-Hung Ng asked us whether the eighth root of unity in
\cite{S:FSI} theorem 3.5 is related to the signature modulo $8$ for some
related lattice. This indeed turns out to be the case. A simple re-statement of
Shimizu's result gives us a formula relating the indicators
$\nu_{2k}(m_{\cC})$ to certain quadratic Gauss sums; see lemma \ref{l-nu-and-Theta}. 
This formula is the starting point for our calculations, and
we want to explain it in precise terms. For this we need some notation.
\par
Let $G$ be an abelian group, always written additively in this paper unless otherwise stated.
Let $q: G \to \QQ/\ZZ$ be a quadratic form on $G$. 
Given a pair $(G,q)$, one defines the associated quadratic Gauss sum 
\begin{equation}
\Theta( G, q) = \abs{G}^{-1/2} \sum_{x \in G} \be(q(x)), \text{\;\; where \;} \be(x) = e^{2 \pi i x}.
\label{e-def-Theta}
\end{equation}
For $k \in \ZZ$, it will be also convenient to define the invariant 
\begin{equation}
\xi_k(G, q) = \Theta(G, q)^k \Theta( G, -k \cdot q).
\label{e-def-xi}
\end{equation}
Let $\cC = \op{TY}(G, \chi, \tau)$ be a TY-category where $(G, \chi, \tau)$ is a triple as above.
We choose a quadratic form $q$ on $G$ such that
$\chi(x,y) = \be( -\partial q(x,y) )$ where $\partial q : G \times G \to \QQ/\ZZ$ denotes the symmetric $\ZZ$-bilinear form
\begin{equation}
\partial q (x, y) = q(x + y)  - q(x) -  q(y).
\label{eq-def-partial-q}
\end{equation}
One can show that such a $q$ always exists. In lemma \ref{l-nu-and-Theta},
we prove that for $k \geq 1$,
\begin{equation*}
\nu_{2k}( m_{\cC} ) = \op{sign}(\tau)^k \xi_k(G, q).
\end{equation*}
Much of the calculation in sections \ref{section-Gauss-sum} and \ref{section-TY} is geared towards
finding explicit formulae for $\xi_k(G,q)$ by using the classification
of the irreducible quadratic forms and the known values of Gauss sums of these irreducible forms.
The calculations are more complicated when $G$ is a $2$-group, 
which is a well known feature in the theory of quadratic forms on finite abelian groups.
When $G$ is a $2$-group,  and $v_2(k)$ (the two-valuation of $k$) is at least 1, we relate 
$\xi_k(G,q)$ to an invariant $\sigma_{v_2(k)}(\partial q)$ of the the pair $(G,\partial q)$
(see lemma \ref{l-two-power-ind}).
The invariant $\sigma_n(\partial q)$ is a generalization of Karviere-Brown-Peterson-Browder invariant, see
\cite{Br:G} and page 33 of \cite{KK:LP}. Detailed calculation of the values of the Gauss sums and properties of 
the invariant $\sigma_n(\partial q)$ lets us conclude that the bicharacter $\chi$ can be recovered
from values of the  Gauss sums, thus proving our theorem.
\par
Sections \ref{section-bq} through \ref{section-indicator} contain preparatory material.
In section \ref{section-bq}, we collect the background material necessary for quadratic and bilinear
forms on finite abelian groups and their classification.
The results here are mostly due to C.T.C.Wall \cite{W:QF}; also see
\cite{MI:SB}, \cite{KK:LP}, \cite{VVN:ISBF} and the proofs can be found in these references. 
However, we have chosen to include the proofs of most of what we need in a detailed appendix. In particular we give a
proof of the existence part of Wall's theorem (See theorem \ref{t-wall}) on the classification of non-degenerate quadratic and bilinear forms 
on finite abelian groups. 
We have explained our reason for including the appendix in section \ref{section-bq}, following the statement of
theorem \ref{t-wall}.
\par
Section \ref{section-Gauss-sum} contains the background on values of Gauss sums and calculation
of $\xi_k(G,q)$ in various cases.
Section \ref{section-indicator} introduces the $\op{TY}$-categories and relates the indicator values
$\nu_{2k}(\cC)$ with Gauss sums. With these preparations, we prove 
theorem \ref{t-indicators-determine-b} in section \ref{section-TY}. 
\par
Finally, in section \ref{section-statesum} we apply theorem \ref{t-indicators-determine-b}, to address a
recent conjecture of Turaev and Vainerman \cite{TV:TY} regarding $3$-manifold invariants constructed from
$TY$-categories. 
Given a compact $3$-manifold $M$ and a spherical category $\cC$, one can
define an invariant $\abs{M}_{\cC}$, called the state-sum invariant, see \cite{TV:TY}.
In \cite{TV:OT} it was shown that $\abs{M}_{\cC} = \tau_{Z(\cC)}(M)$, where $Z(\cC)$ is the
Drinfeld center of $\cC$ and $\tau_{Z(\cC)}(M)$ denotes the Reshetikhin-Turaev invariant.
For $k \geq 1$, let 
$L_{k,1} = \lbrace (z_1,z_2) \in \CC^2 \colon \abs{z_1}^2 +\abs{z_2}^2 = 1\rbrace/\langle (z_1, z_2) \sim e^{2 \pi i/k} (z_1, z_2) \rangle $ 
denote the lens spaces. 
In theorem \ref{t-Lens}, we show that a $\op{TY}$-category 
$\cC = \op{TY}(G, \chi, \tau)$ is determined by the sequence of state-sum invariants 
$\lbrace \abs{L_{k,1}}_{\cC} \colon k \geq 1 \rbrace$ as long as we restrict to categories such that
$\abs{G}$ has an odd factor. Turaev and Vainerman proved this result 
assuming that $\abs{G}$ is odd and conjectured that a similar result should hold for groups
of even order. In section \ref{section-statesum}, we exhibit two non-isomorphic tuples 
$(G, \chi, \tau)$ and $(G', \chi', \tau')$ such that $ \abs{L_{k,1}}_{\op{TY}(G, \chi, \tau) } =  \abs{L_{k,1}}_{\op{TY}(G', \chi', \tau')}$
for all $k$. In our example, both $G$ and $G'$ have order $64$.
This example demonstrates that one needs to put some hypothesis on the possible orders of $G$, or else consider state-sum invariants of other 3-manifolds if one has to recover the category from the data of these invariants.
\par
Quadratic and bilinear forms on finite abelian groups appear in various places in topology
and geometry. We give some examples:
\begin{itemize}
\item The ``torsion linking pairing'' on the torsion part of the $n$-th integral homology of a $(2n + 1)$ dimensional
real compact manifold coming from Poincar\'e duality and intersection pairing, for example, see \cite{KK:LP}.
For $3$-manifolds we get a pairing on the torsion $1$-cycles related to the linking number.
For this reason, discriminant forms are called linking pairs in \cite{KK:LP}.
\item Intersection pairing on the torsion part of middle cohomology of a $(4n + 2)$ dimensional manifold and
computation of Kervaire-Arf invariants, see \cite{Br:G}.
\item Study of integral lattices coming from algebraic geometry, for example study of $K_3$ surfaces,
see \cite{VVN:ISBF}. Let $G$ be a finite abelian group and $b$ be a non-degenerate symmetric bilinear form on $G$.
For each pair $(G,b)$, there exists a pair $(L,B)$, where 
$L \simeq \ZZ^n$ and $B: L \times L \to \ZZ$ is a non-degenerate symmetric $\ZZ$-bilinear form
such that $G = L'/L$ and $b$ is the $\QQ/\ZZ$ valued form induced on $L'/L$ by
$B$; here $L'$ denotes the dual lattice of $L$. For this reason we have borrowed the name 
``discriminant form'' from \cite{VVN:ISBF} for pairs $(G,b)$.
\end{itemize}
We hope that the methods of calculation of Gauss sums will have other uses
in computations of Gauss sums coming from the above sources. 
\par
{\bf Acknowledgment:} This work would not be possible without the guidance of Siu-Hung Ng during the
inception of the project. We are grateful to him for suggesting the problem for this project.
We would also like to thank him for his encouragement, many useful conversations, and for pointing
out many references. We would like to thank both our referees for thoughtful reviews.
The referee's suggested revision of an earlier draft had lead to a lot of simplification of
our previous proof and a signification reduction in the preparatory lemmas needed.
%
%********************************************************************************
%********************************************************************************
%
%
%********************************************************************************
%********************************************************************************
%
\section{Bilinear and quadratic forms on finite abelian groups}
\label{section-bq}
%
%********************************************************************************
%********************************************************************************
%
{ \bf Definition.} Let $G$ be a finite abelian group (written additively). Let $\exp(G)$ denote the exponent of $G$.
A {\it discriminant form} is a pair $(G,b)$ where $G$ is a finite abelian group and $b: G \times G \to \QQ/\ZZ$ is a symmetric bilinear form on $G$.
As all the bilinear forms considered in this article are symmetric,  the adjective ``symmetric" will sometimes be dropped. 
 Say that $b$ or $(G,b)$ is {\it non-degenerate} if for each nonzero $x \in G$ there exists $y \in G$ such that $b(x,y) \neq 0$.
\par
Let $G$ be a finite abelian group and $q$ be a quadratic form on $G$. 
We say that the pair $(G,q)$ is a {\it pre-metric} group.
We say that $q$ is non-degenerate and $(G,q)$ is a {\it metric group} if the bilinear form $\partial q$ (see eq. \eqref{eq-def-partial-q}) 
is non-degenerate. 
\par
The morphisms in the categories of discriminant forms and pre-metric groups are defined as usual.
Isomorphisms are often called isometries. There is an obvious notion of orthogonal direct sum on 
discriminant forms and pre-metric groups. If $(G_1, q_1)$ and $(G_2, q_2)$ are two pre-metric groups,
we let $(G_1, q_1) \bot (G_2, q_2)$ denote their orthogonal direct sum. 
The map $(G,q) \mapsto (G, \partial q)$ defines a functor from the category of  pre-metric groups (resp. metric groups) to
the category of discriminant forms (resp. non-degenerate
discriminant forms).
\newline
\par
{\bf Remark.}
Let $G$ be a finite abelian group. 
Note that a bilinear form on $G$ takes values in $\exp(G)^{-1} \ZZ/ \ZZ$.
Let $(G,q)$ be a pre-metric group. Let $a \in G$. Note that $\partial q(a,a) = 2 q(a)$, and so $q$ takes value in $(2 \exp(G))^{-1} \ZZ/ \ZZ$. If $G$ has odd order, then
 $a = 2(\tfrac{ \exp(G)+1}{2})a$. So $q( a) = (\tfrac{ \exp(G) + 1}{2})  \partial q(a,a)$.
Hence $q$ actually takes value in $ \exp(G)^{-1} \ZZ/ \ZZ$ and $\partial q$ determines $q$.
But this fails for groups of even order.
For example, consider the non-degenerate bilinear form on $\ZZ/ 4 \ZZ$ given by $b(x,y) = x y/4$. 
Then $q(x) = x^2/8$ and $q'(x) = 5 x^2/8$ are two distinct quadratic forms on $\ZZ/4\ZZ$
such that  $\partial q =\partial q' = b$. 
\newline
\par
{ \bf Definition.} Let $p$ be a prime. If $a$ is a rational number, $v_p(a)$ will denote the $p$-valuation of $a$.
It will be convenient to extend the definition of $p$-valuation as follows. Let $G$ be an abelian $p$-group.
Define $v_p: G \to \ZZ_{\leq 0} \cup \lbrace \infty \rbrace$ by $v_p(x) = - \log_p( \op{order}(x))$ 
if $x $ is a non-zero element of $G$, and $v_p(0) = \infty$.
We say that $v_p(x)$ is the {\it $p$-valuation} of $x$.
\par
This definition of $p$-valuation is useful to us because of the following example.
Let $\QQ_{(p)}$ be the ring of all rational numbers of the form $m/p^r$ where $m \in \ZZ$ and $r \in \ZZ_{\geq 0}$. 
If $(G, q)$ is a pre-metric $p$-group, then observe that $q$ and $\partial q$ takes values in the $\ZZ$-module 
$\QQ_{(p)} / \ZZ$.  
If $\alpha$ is a non-zero element of $\QQ_{(p)} / \ZZ$, then it can be written as $p^{-n} a$ for some $a \in \ZZ$
relatively prime to $p$. One has $v_p( \alpha) = -n$.
\newline
\par
Let $(G, b)$ be a discriminant form. Let $e_1, \dotsb, e_k \in G$ and $b_{i j} = b(e_i, e_j)$. 
The matrix $B = ( \!( b_{ i j} ) \!)$ is called the {\it Gram matrix} of $e_1, \dotsb, e_k$. 
We shall write $\op{Gram}_b( e_1, \dotsb, e_n) = B$. One has
\begin{equation*}
b\left( \sum_i g_i e_i, \sum_j h_j e_j\right) = (g_1, \dotsb, g_k) B (h_1 ,  \dotsb, h_k)^{tr}
\text{\; for all  \;} g_1,...g_k, h_1,...,h_k \in \ZZ.
\end{equation*}
A discriminant form (resp. pre-metric group) is called {\it irreducible} if it cannot be
written as an orthogonal direct sum of two non-zero discriminant forms (resp. pre-metric groups).
A finite abelian group is {\it homogeneous} 
if it is isomorphic to $(\ZZ/ p^r \ZZ)^n$ for some prime $p$ and positive integers $r$ and $n$.
For a $p$-group $G$, we let $\op{rk}(G)$ denote the minimum number of generators for $G$
or equivalently $\op{dim}_{\mathbb{F}_p} (G/ \Phi(G))$ where $\Phi(G)$ is the Frattini subgroup of $G$.
In particular
\begin{equation*}
\op{rk}((\ZZ/ p^r \ZZ)^n) = n.
\end{equation*}
An element of $(\ZZ/ p^r \ZZ)^n$ will often be written as a vector whose entries
come from $\ZZ/ p^r \ZZ$. A discriminant form on a homogeneous finite abelian group
will be often written down as $( (\ZZ/ p^r \ZZ)^n ,  B )$ where $B$ is a $n \times n$ matrix
with entries in $p^{-r} \ZZ/ \ZZ$ such that $b( x, y) = x B y^{tr}$ for all $x, y \in (\ZZ/ p^r \ZZ)^n$.
Let $p$ be an odd prime and $u_p$ denote a quadratic non-residue modulo
$p$. Table \ref{table-1} lists the irreducible metric groups $(G, q)$ and corresponding
irreducible discriminant forms $(G, \partial q)$.
\begin{table}
\centerline{
\begin{tabular}{|c|c|c|}
\hline
name (from \cite{MI:SB})  & $(G, q)$ &  $(G,\partial q)$ \\
\hline
$A_{p^r}$  & $\Bigl( \ZZ/ p^r \ZZ, q(x) = \frac{(p^r + 1)/2}{p^r} x^2 \Bigr) $ &  $\Bigl(\ZZ/p^r \ZZ, \frac{1}{p^r} \Bigr)$  \\ 
\hline
$B_{p^r}$ & $\Bigl( \ZZ/ p^r \ZZ, q(x) = \frac{u_p(p^r + 1)/2}{p^r} x^2 \Bigr) $ &  $\Bigl(\ZZ/p^r \ZZ, \frac{u_p}{p^r} \Bigr)$  \\ 
\hline
\hline
$A_{2^r}$  & $\Bigl( \ZZ/ 2^r \ZZ, q(x) = \frac{1}{2^{r+1}} x^2 \Bigr) $ &  $\Bigl(\ZZ/2^r \ZZ, \frac{1}{2^r} \Bigr)$ \\ 
\hline
$B_{2^r}$  & $\Bigl( \ZZ/ 2^r \ZZ, q(x) = \frac{-1}{2^{r+1}} x^2 \Bigr) $ &  $\Bigl(\ZZ/2^r \ZZ, \frac{-1}{2^r} \Bigr)$  \\ 
\hline
$C_{2^r}$  & $\Bigl( \ZZ/ 2^r \ZZ, q(x) = \frac{5}{2^{r+1}} x^2 \Bigr) $  &  $\Bigl(\ZZ/2^r \ZZ, \frac{5}{2^r} \Bigr)$\\ 
\hline
$D_{2^r}$ & $\Bigl( \ZZ/ 2^r \ZZ, q(x) = \frac{-5}{2^{r+1}} x^2 \Bigr) $  &  $\Bigl(\ZZ/2^r \ZZ, \frac{-5}{2^r} \Bigr)$ \\ 
\hline
$E_{2^r}$  & $\Bigl((\ZZ/2^r \ZZ)^2 , q(x_1, x_2) =  \frac{x_1 x_2}{2^r} \Bigr)  $  
& $\Bigl((\ZZ/2^r \ZZ)^2, \bigl( \begin{smallmatrix} 0 & 2^{-r} \\ 2^{-r} & 0 \end{smallmatrix} \bigr) \Bigr)$
\\ 
\hline
$F_{2^r}$ & $\Bigl((\ZZ/2^r \ZZ)^2 , q(x_1, x_2)  =  \frac{x_1^2 + x_1 x_2 + x_2^2}{2^r} \Bigr)  $  
&  $\Bigl((\ZZ/2^r \ZZ)^2, \bigl( \begin{smallmatrix} 2^{1 - r} & 2^{-r} \\ 2^{-r} & 2^{1 - r} \end{smallmatrix} \bigr) \Bigr)$ \\ 
\hline
\end{tabular}
} 
\caption{
Irreducible quadratic and symmetric  bilinear forms. In the first two rows of the table $p$ represents an odd prime. 
For the prime $2$ and for $r=1$ or $2$, some of the forms above are isometric.
For example $A_2 \simeq C_2$.}
\label{table-1}
\end{table}
\begin{theorem}[\cite{W:QF}, also see \cite{MI:SB}, \cite{VVN:ISBF}]
(a) Each non-degenerate discriminant form is an orthogonal direct sum of the irreducible discriminant forms listed in table \ref{table-1}.
\par
(b) Each metric group is an orthogonal direct sum of the irreducible metric groups listed in table \ref{table-1}.
\par
It follows that given any non-degenerate symmetric bilinear form $b$ on a finite abelian group $G$, there exists a quadratic form
$q$ on $G$ such that $\partial q = b$.
 \label{t-wall}
\end{theorem}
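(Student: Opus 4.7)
The plan is to establish (a) and (b) together by induction on $|G|$, splitting off one irreducible orthogonal summand at each step, and then to deduce the last sentence as an immediate corollary of (a). The overall strategy mirrors the diagonalization algorithm for quadratic forms over $p$-adic integers (\cite{CS:SP}, Ch.~15, \S 4.4), transported to the finite-group setting. First I would reduce to the case where $G$ is a $p$-group: if $x, y \in G$ have orders that are powers of distinct primes, then $b(x, y) \in \QQ/\ZZ$ is annihilated by coprime integers and so vanishes, and then $q(x + y) = q(x) + q(y)$; hence both forms decompose orthogonally across the primary decomposition $G = \bigoplus_p G_p$. Fix such a $p$-primary component with $\exp(G) = p^r$, pick $e \in G$ with $v_p(e) = -r$, and use Lemma \ref{l-val2} to choose $f \in G$ with $v_p(b(e, f)) = -r$.

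For odd $p$, the polarization identity $2 b(e, f) = b(e + f, e + f) - b(e, e) - b(f, f)$ together with the invertibility of $2$ modulo $p^r$ forces at least one of $e$, $f$, $e + f$, say $e'$, to satisfy $v_p(b(e', e')) = -r$. Then $\langle e' \rangle$ has order $p^r$ and $b|_{\langle e' \rangle}$ is non-degenerate, so $G = \langle e' \rangle \oplus \langle e' \rangle^{\perp}$; up to rescaling by a unit modulo $p^r$, $b(e', e')$ is either $p^{-r}$ or $u_p p^{-r}$, identifying the rank-one summand with $A_{p^r}$ or $B_{p^r}$. Because $|G|$ is odd, $q$ is uniquely recovered from $\partial q$, so the same splitting works for (b), and one inducts on $|\langle e' \rangle^{\perp}|$.

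The hard part will be $p = 2$, where $2$ is not a unit and a rank-one split need not exist. I would distinguish two sub-cases. If some $e' \in \langle e, f \rangle$ of order $2^r$ satisfies $v_2(b(e', e')) = -r$, split off $\langle e' \rangle$ as above; the four odd residue classes $\pm 1, \pm 5$ of $2^{r+1} q(e') \pmod 8$ then identify the rank-one summand with exactly one of $A_{2^r}, B_{2^r}, C_{2^r}, D_{2^r}$. Otherwise $b(e, e)$ and $b(f, f)$ both lie in $2 \cdot 2^{-r}\ZZ/\ZZ$ while $b(e, f) \in 2^{-r}\ZZ/\ZZ$ is a unit; a unimodular change of basis inside $\langle e, f \rangle$ puts the gram matrix into one of the two shapes $\bigl(\begin{smallmatrix} 0 & 2^{-r} \\ 2^{-r} & 0 \end{smallmatrix}\bigr)$ or $\bigl(\begin{smallmatrix} 2^{1-r} & 2^{-r} \\ 2^{-r} & 2^{1-r} \end{smallmatrix}\bigr)$, yielding an $E_{2^r}$ or $F_{2^r}$ summand, and a direct computation then matches the induced quadratic form to Table \ref{table-1}. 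The main technical obstacle is exactly this rank-two step: justifying the normal-form reduction of $2 \times 2$ gram matrices over $2^{-r} \ZZ/\ZZ$, and ruling out other quadratic forms on $\ZZ/2^r\ZZ \oplus \ZZ/2^r\ZZ$ with these bilinear shapes.

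For the final assertion, decompose $(G, b)$ into irreducibles via (a); the $(G, q)$ column of Table \ref{table-1} exhibits, for each irreducible discriminant form $(H_i, b_i)$ in the decomposition, an explicit quadratic form $q_i$ with $\partial q_i = b_i$. Setting $q = \bot_i q_i$ then gives a quadratic form on $G$ with $\partial q = b$.
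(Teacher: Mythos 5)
Your plan is correct, and at its core it is the same algorithm the paper uses: reduce to $p$-groups, locate a pivot of minimal $p$-valuation (using polarization/row-addition when no diagonal entry works, which is exactly the step that can fail for $p=2$ and forces the rank-two blocks), split it off, and induct. The organizational difference is real, though. The paper runs the induction as a matrix computation: it block-diagonalizes the gram matrix of a fixed cyclic decomposition by explicit row-column operations (lemmas \ref{l-diagonalization} and \ref{l-diagonalization2}) and must then separately verify that each operation is ``valid,'' i.e.\ sends a direct-sum basis to a direct-sum basis, via the valuation lemmas and lemma \ref{l-abgg}. You instead split off $\langle e'\rangle \oplus \langle e'\rangle^{\perp}$ (or the rank-two analogue) directly from the non-degeneracy of the restricted form; this buys you freedom from that bookkeeping, at the cost of having to check that the orthogonal complement is again non-degenerate and, in the $p=2$ rank-two sub-case, that $\langle e,f\rangle \cong (\ZZ/2^r\ZZ)^2$ --- which does follow from $b(e,f)$ being a unit times $2^{-r}$ while $b(e,e),b(f,f)$ are not, but you should say so. The step you flag as ``the main technical obstacle'' --- normalizing the $2\times 2$ gram matrices and the quadratic forms lying over them --- is precisely lemmas \ref{l-rank-two-irr-b} and \ref{l-rank-two-irr-q}, which the paper states before the theorem and proves by Hensel's lemma; its own proof of theorem \ref{t-wall} likewise just invokes them, so you may do the same rather than reprove them. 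Your deduction of the final assertion from (a) and the $(G,q)$ column of table \ref{table-1} matches the paper's.
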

A proof of  theorem  \ref{t-wall} has been sketched in the appendix \ref{app-A}. Here we shall only give a brief indication of our argument.
This argument seems to be different from the proofs in the references above and we believe it is simpler. 
It  is probably well known to experts but we have not seen it spelled out in literature.
\par
Let $(G,b)$ be a discriminant form. Write $ G = \oplus_{p} G_{(p)}$ where $G_{(p)}$ is the $p$-Sylow subgroup of $G$.
Let $b_{(p)}$ be the restriction of $b$ to $G_{(p)} \times G_{(p)}$. It is easy to see that 
$(G, b)$ is an orthogonal direct sum of $(G_{(p)}, b_{(p)})$
as $p$ varies over primes. So it suffices to decompose $(G, b)$ into irreducibles when $G$ is a $p$-group for some prime $p$. 
\par
Let $G$ be a finite abelian $p$-group and $b$ be a non-degenerate symmetric bilinear form on $G$. 
The algorithm for decomposing $(G,b)$ into irreducibles boils down to diagonalizing symmetric matrices with
entries in $\QQ_{(p)}/\ZZ$ via conjugation.
The algorithm for diagonalization is the same as the well known algorithm for diagonalizing quadratic forms over
$p$-adic integers, see, for example, \cite{CS:SP} chapter 15, section 4.4. This algorithm is the core of our argument.
We repeat that we could not find this argument written out in literature for bilinear forms on finite abelian groups.
This is our first reason for including the appendix.
A second reason is that the argument is constructive and so it can be useful in actually decomposing given bilinear forms 
over finite abelian groups into irreducibles. 
A third reason is that part (b) of theorem \ref{t-wall} as well as lemma \ref{l-rank-two-irr-q} (which we need in our arguments)
are not explicitly stated in \cite{W:QF}. They can probably be extracted from
the arguments in \cite{W:QF} or the other references  \cite{MI:SB}, \cite{VVN:ISBF}.
But this might require some work mainly because each paper has its own and rather
complicated set of notations.
\par
The following lemma, describing the non-degenerate quadratic forms on $(\ZZ/2^r \ZZ)^2$,
is essential to the proof of theorem \ref{t-wall}.
It is stated here because we shall also use it in the computation of some Gauss sums.
It can be proved using Hensel's lemma. A proof is given in appendix \ref{app-A}.
\begin{lemma}
Let $q$ be an irreducible non-degenerate quadratic form on $G = (\ZZ/2 ^r \ZZ)^2$.
Then there exists $A, B, C \in \ZZ$ with $B$ odd such that
$q(x_1, x_2) = 2^{-r} (A x_1^2 + B x_1 x_2 + C x_2^2)$.
If $AC$ is even, then $(G, q) \simeq ((\ZZ/2 ^r \ZZ)^2, x_1 x_2/ 2^r)$.
Otherwise $(G, q) \simeq ((\ZZ/2 ^r \ZZ)^2, (x_1^2 + x_1 x_2 + x_2^2)/ 2^r)$.
\label{l-rank-two-irr-q}
\end{lemma}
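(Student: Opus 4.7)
My plan is to first reduce $q$ to a standard form via lemma \ref{l-diagonalization2}(b), and then produce an explicit isomorphism with $E_{2^r}$ or $F_{2^r}$ by solving quadratic equations modulo $2^r$ using Hensel's lemma. For the reduction: if $(G, q)$ is irreducible as a metric group, then $(G, \partial q)$ is irreducible as a discriminant form, because any orthogonal decomposition $G = G_1 \oplus G_2$ for $\partial q$ lifts to one for $q$, since $\partial q(x_1, x_2) = 0$ forces $q(x_1 + x_2) = q(x_1) + q(x_2)$. Applying lemma \ref{l-diagonalization2}(b) to $\partial q$ on $(\ZZ/2^r\ZZ)^2$ therefore produces a single $2 \times 2$ block, yielding a basis $e_1, e_2$ of $G$ with $\op{gram}_{\partial q}(e_1, e_2) = 2^{-r}\bigl(\begin{smallmatrix} 2a & b \\ b & 2c \end{smallmatrix}\bigr)$ and $b$ odd. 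Since $2q(e_i) = \partial q(e_i, e_i)$ determines $q(e_i)$ only up to addition of $1/2 = 2^{r-1}/2^r$ and both options lie in $2^{-r}\ZZ/\ZZ$, we may set $A = 2^r q(e_1)$, $C = 2^r q(e_2)$, $B = b$, obtaining $q(x_1, x_2) = (A x_1^2 + B x_1 x_2 + C x_2^2)/2^r$ with $B$ odd.

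\textbf{Case $AC$ even.} Swapping $e_1, e_2$ if needed, assume $A$ is even. Let $g(k) = A + Bk + Ck^2$; then $g(0) = A \equiv 0 \pmod 2$ and $g'(0) = B$ is odd, so Hensel's lemma produces $k \in \ZZ/2^r\ZZ$ with $g(k) \equiv 0 \pmod{2^r}$. Put $f_1 = e_1 + k e_2$, so $q(f_1) = 0$. Since $\partial q(f_1, e_2) = (B + 2kC)/2^r$ is a unit of $2^{-r}\ZZ/\ZZ$, the equation $C + m(B + 2kC) \equiv 0 \pmod{2^r}$ is solvable in $m$, and $f_2 := e_2 + m f_1$ satisfies $q(f_2) = 0$. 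Moreover $\partial q(f_1, f_2) = \partial q(f_1, e_2) = u/2^r$ with $u$ a unit, while the change-of-basis matrix $\bigl(\begin{smallmatrix} 1 & m \\ k & 1+mk \end{smallmatrix}\bigr)$ has determinant $1$, so $\{f_1, f_2\}$ is a basis. Replacing $f_1$ by $u^{-1} f_1$ normalizes $\partial q(f_1, f_2)$ to $1/2^r$, giving $q(y_1, y_2) = y_1 y_2/2^r$ and hence $(G, q) \simeq E_{2^r}$.

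\textbf{Case $AC$ odd.} Now $A, B, C$ are all odd, and I aim to build a basis $f_1, f_2$ with $q(f_i) = 1/2^r$ and $\partial q(f_1, f_2) = 1/2^r$, which forces $q(y_1, y_2) = (y_1^2 + y_1 y_2 + y_2^2)/2^r$ and $(G, q) \simeq F_{2^r}$. For $f_1 = e_1 + \beta e_2$ (taking $\alpha = 1$): the equation $A + B\beta + C\beta^2 \equiv 1 \pmod{2^r}$ reads $h(\beta) := (A - 1) + B\beta + C\beta^2 \equiv 0$, with $h(0) = A - 1$ even and $h'(0) = B$ odd, so Hensel's lemma lifts $\beta = 0 \pmod 2$ to a solution mod $2^r$. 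For $f_2 = \gamma e_1 + \delta e_2$: the system $q(f_2) \equiv 1/2^r$ and $\partial q(f_1, f_2) \equiv 1/2^r \pmod 1$ admits the mod-$2$ solution $(\gamma, \delta) = (0, 1)$, and its Jacobian there equals $\bigl(\begin{smallmatrix} B & 2C \\ 2A & B \end{smallmatrix}\bigr) \equiv I \pmod 2$ because $B$ is odd, so multivariable Hensel lifts to a solution in $\ZZ/2^r\ZZ$. Finally $\alpha\delta - \beta\gamma \equiv 1 \cdot 1 - 0 \cdot 0 \equiv 1 \pmod 2$, confirming that $\{f_1, f_2\}$ is a basis.

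The technical heart of the argument is the Hensel lifting in both cases: the relevant Jacobian must be invertible modulo $2$, which is ensured precisely by the oddness of the off-diagonal entry $B$ of $\op{gram}_{\partial q}(e_1, e_2)$. The hardest step is the multivariable lifting for $f_2$ in the $AC$-odd case, where one must simultaneously satisfy two quadratic congruences in $(\gamma, \delta)$; the identity Jacobian mod $2$ is what makes this go through uniformly in $r$.
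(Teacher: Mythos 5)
Your proof is correct, but it is organized differently from the paper's in both halves. For the normal form $q = 2^{-r}(Ax_1^2+Bx_1x_2+Cx_2^2)$ with $B$ odd, the paper argues directly in the given coordinates: it writes $q = 2^{-r-1}(\alpha x_1^2 + 2Bx_1x_2+\gamma x_2^2)$ and completes the square to show that an odd diagonal coefficient (or an even $B$) would split $(G,q)$, contradicting irreducibility. You instead observe that irreducibility of $q$ descends to $\partial q$ and invoke lemma \ref{l-diagonalization2}(b); this is legitimate (no circularity) and slightly slicker, though it delivers the normal form only in a new basis --- a harmless difference, since the relevant conditions ($q$ valued in $2^{-r}\ZZ/\ZZ$, off-diagonal coefficient odd) are preserved under basis changes of odd determinant, so the literal statement in the standard coordinates follows as well. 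For the isometry with $E_{2^r}$ or $F_{2^r}$, the paper matches all three coefficients at once by solving $(A(s),B(s),C(s))\equiv(A,B,C) \bmod 2^r$ for the full change-of-basis matrix, a three-equation, four-unknown Hensel system (lemma \ref{l-congruences}); you build the new basis one vector at a time, which replaces that with a one-variable Hensel lift for $f_1$ and, in the $E$ case, a purely linear step for $f_2$ --- arguably more elementary --- at the cost of a genuine two-variable Hensel system for $f_2$ in the $F$ case. One cosmetic remark: the Jacobian of your $F$-case system at $(\gamma,\delta)=(0,1)$ is $\bigl(\begin{smallmatrix}B & 2C\\ 2A+B\beta & B+2C\beta\end{smallmatrix}\bigr)$ rather than the matrix you wrote, but since $\beta$ is even it is still congruent to the identity modulo $2$, so the lifting goes through as claimed.
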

\section{Gauss sums and related invariants of a quadratic form}
%
%********************************************************************************
%********************************************************************************
%
\label{section-Gauss-sum}
Let $G$ be a finite abelian group and $q : G \to \QQ/ \ZZ$ be a quadratic form on $G$.
In section \ref{section-introduction}, we defined  the quadratic Gauss sums $\Theta(G,q)$ and the related invariant
$\xi_k(G,q)$, see equations \eqref{e-def-Theta} and \eqref{e-def-xi}.
In this section we shall compute the invariants $\Theta(G,q)$ and 
$\xi_k(G,q) $ for various pairs $(G,q)$. 
One verifies that $\Theta$ is multiplicative, that is, 
\begin{equation*}
\Theta((G_1, q_1) \bot (G_2, q_2) ) = \Theta(G_1,q_1) \Theta(G_2, q_2).
\end{equation*}
In the same sense, $\xi_k$ is also multiplicative.
We start with the following well known result. The proof is omitted.
\begin{theorem}
(a) Let $\chi : G \to \CC^*$ be a character on $G$. Then $\sum_{x \in G} \chi(x) = \abs{G}$ if $\chi = 1$
and $\sum_{x \in G} \chi(x)= 0$ otherwise. 
\par
(b) If $q$ is a non-degenerate quadratic form on $G$, then $\Theta(G, q) \Theta(G, -q) = 1$, in particular,
 $\abs{\Theta(G,q)}^2 = 1$.
\label{l-absolute-value-of-Gauss-sum}
\end{theorem}
The next lemma gives the values of the Gauss sums of irreducible non-degenerate forms.
\begin{lemma}
(a) Let $p$ be an odd prime and $\alpha$ be an integer relatively prime to $p$. Then  
\begin{equation*}
\Theta\bigl( \ZZ/ p^r \ZZ, \alpha(p^r + 1) x^2 / 2p^r) =  \Bigl( \frac{2 \alpha}{p} \Bigr)^r \epsilon_{p^r},
\end{equation*}
where $\Bigl( \frac{2 \alpha}{p} \Bigr)$ denotes the Legendre symbol, and  
$\epsilon_{m} = 1$ if $m \equiv 1 \bmod 4$ and $\epsilon_{m}  = i$ if $m \equiv 3 \bmod 4$.
\par
(b) Let $\alpha$ be an odd integer. Then 
\begin{equation*}
\Theta( \ZZ/ 2^r \ZZ, \alpha x^2/ 2^{r+1}) =  (-1)^{r(\alpha^2 - 1)/8} \be( \alpha/8).
\end{equation*}
\par
(c) Let $\alpha,\beta,\gamma$ be integers with $\beta$ odd. Then
\begin{equation*}
\Theta ( (\ZZ/ 2^r \ZZ)^2 , (\alpha  x_1^2 + \beta x_1 x_2 + \gamma  x_2^2)/ 2^r )
 =  (-1)^{ \alpha \gamma r}.
\end{equation*}
\label{l-gauss-sums-of-irreducible-forms}
\end{lemma}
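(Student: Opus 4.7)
The three formulas correspond to the three families of irreducible non-degenerate forms in Table~\ref{table-1}, and the plan is to reduce each to a classical quadratic Gauss sum, using Lemma~\ref{l-rank-two-irr-q} to replace the form in (c) by a standard representative. For (a), since $p$ is odd, $(p^r+1)/2$ is an integer; setting $a = \alpha(p^r+1)/2$ and noting $a \equiv 2^{-1}\alpha \pmod p$ is coprime to $p$, the defining sum becomes the classical quadratic Gauss sum $\sum_{x=0}^{p^r-1} e^{2\pi i a x^2/p^r}$, whose value is $\bigl(\tfrac{a}{p}\bigr)^r \epsilon_{p^r} p^{r/2}$. Dividing by $p^{r/2}$ and using multiplicativity of the Legendre symbol to replace $\bigl(\tfrac{a}{p}\bigr)$ by $\bigl(\tfrac{2\alpha}{p}\bigr)$ gives the claim.

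For (b), I would argue by induction on $r$. The base case $r=1$ is direct: $(1+i^{\alpha})/\sqrt 2 = e^{\pi i \alpha/4} = \be(\alpha/8)$ for odd $\alpha$. For the inductive step, split $\sum_{x \in \ZZ/2^{r+1}\ZZ} \be(\alpha x^2/2^{r+2})$ according to the parity of $x$: the even contribution $x = 2y$ becomes a Gauss sum of the same shape at level $r$ after rescaling, while the odd contribution $x = 2y + 1$, expanded using $(2y+1)^2 = 4y^2 + 4y + 1$ and completed in $y$, reduces to another lower-rank Gauss sum. Combining the two pieces produces a recursion $\Theta_{r+1} = (-1)^{(\alpha^2-1)/8}\,\Theta_r$, which iterates to the stated formula.

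For (c), the bilinear form $\partial q$ has gram matrix $2^{-r}\bigl(\begin{smallmatrix} 2\alpha & \beta \\ \beta & 2\gamma \end{smallmatrix}\bigr)$, non-degenerate because $\beta$ is odd. By Lemma~\ref{l-rank-two-irr-q}, $(G,q)$ is isometric to $E_{2^r}$ when $\alpha\gamma$ is even and to $F_{2^r}$ when $\alpha\gamma$ is odd, so since $\Theta$ is an isometry invariant it suffices to compute $\Theta(E_{2^r})$ and $\Theta(F_{2^r})$. For $E_{2^r}$, the sum $\sum_{x_1,x_2} \be(x_1 x_2/2^r)$ factors, and the inner sum collapses to $2^r$ when $x_1=0$ and vanishes otherwise by orthogonality of characters, giving $\Theta(E_{2^r}) = 1$, matching $(-1)^{\alpha\gamma r}$ in this case. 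For $F_{2^r}$, I would split the sum over $x_2$ by parity and apply the substitution $u = x_1 + y_2$ (with $x_2 = 2y_2$ or $x_2 = 2y_2 + 1$) to decouple the two variables, reducing each piece to one-variable Gauss sums handled by part (b) together with the classical identity $\sum_{u \in \ZZ/2^r\ZZ} \be(u^2/2^r) = (1+i)\,2^{r/2}$ (valid for $r \geq 2$).

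The main technical obstacle is this last step for $F_{2^r}$: one must carefully track the phase shift $\be(1/2^r)$ arising from the ``$+1$'' inside $(2y_2+1)^2$, the factors $\be(1/8)$ and $(-1)^{r-1}\be(3/8)$ coming from applications of part (b) with $\alpha = 1,3$, and the combinatorial effect of passing between moduli $2^{r-1}$ and $2^r$. Once this bookkeeping is done, the even and odd contributions combine to $(-1)^r \cdot 2^r$, so dividing by $\sqrt{|G|} = 2^r$ yields $\Theta(F_{2^r}) = (-1)^r$, matching the claimed formula $(-1)^{\alpha\gamma r}$ when $\alpha\gamma$ is odd.
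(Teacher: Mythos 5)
Parts (a) and (c) of your argument are sound. For (a) you reduce to the classical evaluation $\sum_{x \bmod p^r}\be(ax^2/p^r)=\bigl(\tfrac{a}{p}\bigr)^r\epsilon_{p^r}p^{r/2}$, which is exactly what the paper does (it simply cites \cite{IK:ANT}); your observation that $\bigl(\tfrac{2^{-1}\alpha}{p}\bigr)=\bigl(\tfrac{2\alpha}{p}\bigr)$ is correct. For (c) you take a genuinely different route: the paper runs a two-step recursion in $r$ directly on the general form $(\alpha x_1^2+\beta x_1x_2+\gamma x_2^2)/2^r$, whereas you first use the classification to replace the form by $E_{2^r}$ or $F_{2^r}$ and then evaluate only those two Gauss sums. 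This works --- strictly you should invoke the second half of the proof of lemma \ref{l-rank-two-irr-q} (i.e.\ lemma \ref{l-congruences}), since the statement of lemma \ref{l-rank-two-irr-q} takes irreducibility as a hypothesis rather than deriving it from $\beta$ odd --- and your bookkeeping for $F_{2^r}$ does close up: for $r\ge 2$ the odd-$x_2$ piece vanishes outright because $\sum_{u}\be(c(u^2+u)/2^r)=0$, and the even piece equals $2^{(r+1)/2}\be(1/8)\cdot 2^{(r-1)/2}(-1)^{r-1}\be(3/8)=(-1)^r2^r$. The trade-off is that your route leans on the classification lemma, while the paper's recursion is self-contained; in exchange you only ever compute two explicit sums.

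The genuine gap is in (b). Write $S_r=\sum_{x\bmod 2^r}\be(\alpha x^2/2^{r+1})$ and $\Theta_r=2^{-r/2}S_r$. For $r\ge 2$ the parity splitting you describe gives: the even part $x=2y$ equals $\sum_{y\bmod 2^r}\be(\alpha y^2/2^{r})=2S_{r-1}$ (the denominator becomes $2^{r}$, not $2^{r+1}$, so the level drops by \emph{two}, not one), while the odd part equals $\be(\alpha/2^{r+2})\sum_{y\bmod 2^r}\be(\alpha(y^2+y)/2^r)$, which is $0$ rather than ``another lower-rank Gauss sum'' (pair $y$ with $y+2^{r-1}$: the exponent shifts by $\alpha/2$, so the summand changes sign). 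Hence the splitting yields $S_{r+1}=2S_{r-1}$, i.e.\ $\Theta_{r+1}=\Theta_{r-1}$ --- precisely the paper's recursion $G_r=2G_{r-2}$ --- and \emph{not} the one-step relation $\Theta_{r+1}=(-1)^{(\alpha^2-1)/8}\Theta_r$ you assert. That one-step relation is equivalent to the formula being proved, so it cannot be read off from this computation without circularity. As written, your induction therefore does not close: with a two-step recursion you need both base cases, and you only verify $r=1$. The repair is immediate: keep the splitting, conclude $\Theta_{r+1}=\Theta_{r-1}$, and also check $r=2$, where $\tfrac12\sum_{x=0}^{3}\be(\alpha x^2/8)=\tfrac12(1+\be(\alpha/8)-1+\be(\alpha/8))=\be(\alpha/8)$, which agrees with $(-1)^{2(\alpha^2-1)/8}\be(\alpha/8)$ since $8\mid \alpha^2-1$. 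With that repair part (b) coincides with the paper's proof.
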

\begin{proof}
For part (a), see for example \cite{IK:ANT}, page 52. Let $G_r$ and $G'_r$ denote the left hand side of the formulae in part (b)
and part (c) respectively. Then one verifies that $G_r = 2 G_{r - 2} $ and 
$G'_r = 4 G'_{r - 2}$ for $r > 2$. Parts (b) and (c) now follow by induction once the formulae for $r = 1$ and $2$ are verified.
\end{proof}
Since $\Theta$ is multiplicative, one can calculate the Gauss sums of
arbitrary non-degenerate forms by first decomposing the forms into orthogonal direct sum of irreducible forms
and using lemma \ref{l-gauss-sums-of-irreducible-forms}.
We will also need to compute the Gauss sums of some singular forms. This is the purpose of the lemma below.
\begin{lemma} (a) Let $p$ be a prime.  Let $G = (\ZZ/ p^r \ZZ)^n$ and let $q$ be a $p^{-r} \ZZ/ \ZZ$ valued quadratic form on $G$.
Let $0 \leq s \leq r$. Then $p^s q$ induces a quadratic form  on $G/p^{r - s} G$ and 
\begin{equation*}
\Theta( G, p^s q ) = p^{sn/2} \Theta( G/ p^{r - s} G, p^s q ).
\end{equation*}
\par
(b) Let $\alpha$ be an odd integer. Then one has
\begin{equation*}
\Theta\bigl( \ZZ/2^r \ZZ, 2^{s} \cdot \tfrac{\alpha x^2}{2^{r+1}} \bigr) 
 =\begin{cases}
 2^{s/2} (-1)^{(r-s)(\alpha^2 - 1)/8} \be( \alpha/8) 
  & \text{\; if \;} 0 \leq s < r \\
  0 & \text{\; if \;} s = r \\
  2^{r/2} & \text{\; if \;} s > r.
  \end{cases}
 \end{equation*}
\label{l-theta-of-scaled-form}
\end{lemma}
\begin{proof}
(a) If $x \equiv x' \bmod p^{r - s} G$, then $p^s q(x) = p^s q(x')$ since $q$ and $\partial q$ takes values in
$p^{-r} \ZZ/ \ZZ$. So $p^s q(x)$ induces a form on $G/p^{r - s}G$.
One has
\begin{align*}
{\abs{G}}^{1/2} \Theta(G, p^s q) 
= \sum_{x \in G} \be(p^s q(x))  
&= \abs{p^{r - s} G}  \sum_{y \in G/p^{r - s} G} \be(p^s q(y)) \\
&= \abs{p^{r - s} G} \abs{G / p^{r-s} G}^{1/2} \Theta( G/p^{r - s}G, p^s q).
\end{align*}
Part (a) follows since $\abs{ p^{r - s} G} = p^{sn}$. 
\par
(b) First suppose $r - s \geq 1$. Note that if $y \equiv x \bmod 2^{r - s}$, then
$ \tfrac{\alpha y^2}{ 2^{r - s + 1}} \equiv \tfrac{\alpha x^2}{2^{r - s + 1}} \bmod \ZZ$.
So
\begin{equation*}
2^{r/2} \Theta\bigl( \ZZ/2^r \ZZ, 2^{s} \cdot \tfrac{\alpha x^2}{2^{r+1}} \bigr)= 
\sum_{x = 0}^{2^r - 1} \be( \tfrac{ \alpha x^2}{2^{r - s + 1} })= 
2^s  \sum_{x = 0}^{2^{r - s} - 1} \be( \tfrac{ \alpha x^2}{2^{r - s + 1} })
= 2^{(r+s)/2} \Theta \bigl( \ZZ/2^{r-s} \ZZ, \tfrac{\alpha x^2}{2^{r - s + 1} } \bigr).
\end{equation*}
Part (b) now follows from lemma \ref{l-gauss-sums-of-irreducible-forms} for $0 \leq s < r$.
Now let $s = r$. Note that if $y \equiv x \bmod 2$, then $\tfrac{ \alpha x^2}{2 } \equiv \tfrac{ \alpha y^2}{2} \bmod \ZZ$. So
\begin{equation*}
2^{r/2} \Theta\bigl( \ZZ/2^r \ZZ, 2^{s} \cdot \tfrac{\alpha x^2}{2^{r+1}} \bigr)
=  \sum_{x = 0}^{2^r - 1} \be( \tfrac{ \alpha x^2}{2 })
= 2^{r-1}  \sum_{x = 0}^1  \be( \tfrac{ \alpha x^2}{2 })
=0.
\end{equation*}
For $s > r$, the quadratic form we have is identically equal to $0$, so the result is obvious.
 \end{proof}
\begin{lemma}
Let $p$ be an odd prime and let both $r$ and $k$ be positive integers. Let $q_1$ and $q_2$ be the two non-isometric 
non-degenerate quadratic forms on $G = \ZZ/p^r \ZZ$. Then
\begin{equation*}
\xi_k(G,q_1)
= (-1)^{ \epsilon_{p,r}^k } \xi_k(G,q_2)
\end{equation*}
where $\epsilon_{p,r}^k = r(k+1) - \min\{r, v_p(k)\}$.
\label{l-q1q2s}
 \end{lemma}
\begin{proof} 
There are only two distinct non-generate quadratic forms on $G$, see table \ref{table-1}.
Without loss of generality, we may thus assume  that $q_j(x) = u_j (p^r + 1)x^2/2p^r$ for $j = 1, 2$, where $u_1 = 1$ and $u_2 = u_p$
is a quadratic non-residue modulo $p$.  Lemma \ref{l-gauss-sums-of-irreducible-forms} (a) implies 
$\Theta(G,q_1) = (-1)^r\Theta(G,q_2)$.
If $v_p(k) > r$ the lemma holds by the fact that $\Theta(G,-kq) = \sqrt{\abs{G}}$. 
\par
Now assume $0 \leq v_p(k) \leq r$.
Write $s = v_p(k)$ and $-k  = p^s a$ with $a \in \ZZ$ relatively prime to $s$. 
Then $\Theta(G, -k q_j)$ is equal to
\begin{align*}
\Theta(G, p^s a q_j) 
&= p^{s/2}\Theta(\ZZ/p^{r-s}\ZZ, p^s a u_j (p^r + 1) x^2/ 2 p^r)  \\
&= p^{s/2} \Theta \left(\ZZ/ p^{ r - s} \ZZ, (p^{r-s} + 1) a u_j x^2/ 2 p^{r - s}\right).
\end{align*}
The first equality follows from lemma \ref{l-theta-of-scaled-form}(a). For the second, we need to observe that
the quadratic forms
$(p^{r-s} + 1) \alpha x^2/ 2 p^{r - s}$
and $(p^{r} + 1) \alpha x^2/ 2 p^{r - s}$ are identical on
$\ZZ/ p^{r - s} \ZZ$. 
From lemma \ref{l-gauss-sums-of-irreducible-forms} (a) we have
\begin{equation*}
\Theta \left(\ZZ/ p^{ r - s} \ZZ, (p^{r-s} + 1) a u_p x^2/ 2 p^{r - s}\right) = (-1)^{r-s}\Theta \left(\ZZ/ p^{ r - s} \ZZ, (p^{r-s} + 1) a x^2/ 2 p^{r - s}\right)
\end{equation*}
which implies $\Theta(G,-kq_2) = (-1)^{r - v_p(k)}\Theta(G,-kq_1)$. The lemma follows, once we recall that
$\Theta(G,q_1) = (-1)^r\Theta(G,q_2)$.
 \end{proof}
\par
Next, we shall introduce an invariant $\sigma_k(b)$ of a discriminant form $(G,b)$ defined in \cite{KK:LP} and  in lemma \ref{l-sigma-and-Theta} compare it to our Gauss sums (Note: discriminant forms are called linking pairs in \cite{KK:LP}).
\newline
\par
{\bf Definition.} For the convenience of the reader we shall recall some of the definitions from \cite{KK:LP} and \cite{W:QF}. 
Let $G$ be a finite abelian group. Let
\begin{equation*}
G[n] = \lbrace x \in G \colon n x = 0 \rbrace 
\end{equation*}
denote the $n$-torsion subgroup of $G$.
Let $p$ be a prime. Then $G_{(p)} = \cup_{n} G[p^n]$ is the $p$-Sylow subgroup of $G$.
For $k \geq 1$, define
\begin{equation*}
\tilde{G}_p^k = G[p^k] / ( G[p^{k-1}] + p G[p^{k+1}] ).
\end{equation*}
Take a decomposition of $G$ into a direct sum of cyclic groups of prime power order. 
If such a decomposition has $n$ factors isomorphic to $\ZZ/ p^k \ZZ$, Then 
$\tilde{G}_p^k$ is an elementary abelian $p$-group of rank $n$. 
Let $b$ be a non-degenerate symmetric bilinear form on $G$. Then 
\begin{equation*}
\tilde{b}_p^k ([x],[y]) = p^{k - 1} b(x,y)
\end{equation*}
defines a non-degenerate bilinear form on $\tilde{G}_p^k$. Here $x$ and $y$ denote any two elements
of $G[p^k]$ representing $[x], [y] \in \tilde{G}_p^k$ respectively.
\par
Let $c^k(b)$ be the characteristic element (also called parity element) of the $\mathbb{F}_2$-quadratic space 
$(\tilde{G}_2^k, \tilde{b}_2^k)$. Explicitly, $c^k(b)$ is the unique element of  $\tilde{G}_2^k$
such that $\tilde{b}_2^k( x, x) = \tilde{b}_2^k( x, c^k(b))$ for all $x\in \tilde{G}_2^k$.
In other words, $c^k(b)$ is represented by any $c \in G[2^k]$ that satisfies
\begin{equation*}
2^{k-1} b(x,x) = 2^{k-1} b(x, c) \text{ \; for all \;} x \in G[2^k]. 
\end{equation*}
Note that both sides of the above equality can only take the values $0$ or $1/2$. Also observe that
the characteristic element $c^k(b)$ is zero if and only if $b(x,x) \in 2^{ 1 - k} \ZZ/ \ZZ$ for all $x \in G[2^k]$.
\par
The invariant $\sigma_k(b)$ takes value in $(\ZZ/ 8 \ZZ) \cup \lbrace \infty \rbrace$, which is made into a semigroup
by defining $\infty + \infty = n + \infty = \infty$ for $n \in \ZZ/ 8 \ZZ$. If $c^k(b) \neq 0$, then  $\sigma_n(b) = \infty$
by definition. If $c^k(b) = 0$, then one checks that 
\begin{equation*}
q_k(x) =  2^{k-1} b(x,x)
\end{equation*}
 induces a well defined quadratic form
on $G_{(2)}/G[2^k]$ and, following \cite{KK:LP}, we  can define $\sigma_k(b)$ by
\begin{equation*}
\abs{ G_{(2)}/G[2^k] }^{1/2} \Theta(G_{(2)}/G[2^k], q_k) = C \be( \sigma_k(b)/8),
\end{equation*}
where $C$ is the absolute value of the left hand side of the equation (see \cite{KK:LP}, section 2); we shall soon see that $C \neq 0$.
If $x, y \in G_{(2)}$ represents $[x], [y] \in G_{(2)}/G[2^k]$, then
$\partial q_k([x],[y]) = 2^k b(x,y)$. Suppose $[x] \in G_{(2)}/G[2^k]$ such that
$\partial q_k([x],[y] ) = 0$ for all $[y] \in G_{(2)}/G[2^k]$. Let $x \in G_{(2)}$ be a representative for $[x]$.
Then $2^kb(x,y) = 0$ for all $y \in G_{(2)}$. Since $b$ is non-degenerate, it follows that 
$2^k x = 0$, so $[x] = 0$ in $G_{(2)}/G[2^k]$. So we have argued that if $c^k(b) = 0$, then
$q_k(x)$ is a non-degenerate form on $G_{(2)}/G[2^k]$. Hence lemma \ref{l-absolute-value-of-Gauss-sum}(b)
gives $C =  \abs{G_{(2)}/G[2^k]}^{1/2}$. So $\sigma_k(b)$ is in fact given by the simpler formula
\begin{equation}
\Theta(G_{(2)} \slash G[2^k], q_k) = \be(\sigma_k(b) \slash 8).
\label{eqn-Theta-and-sigma}
\end{equation}
The following theorem is the reason for our interest in the invariant $\sigma_k(b)$ and it follows from theorem 4.1 of \cite{KK:LP}.
\begin{theorem}[\cite{KK:LP}] Let $G$ be a finite abelian $2$-group and let $b$ and $b'$ be two
non-degenerate symmetric bilinear forms on $G$. Then $(G, b) \simeq (G, b')$ if and only if 
$\sigma_k(b) \simeq \sigma_k(b')$ for all $k \geq 1$.
\label{t-KK-quote}
\end{theorem}
%
%\begin{proof}
%We shall indicate how this result follows from theorem 4.1 of \cite{KK:LP}. Theorem 4.1 of \cite{KK:LP} states that two discriminant forms
%$(G, b)$ and $(G', b')$ are isomorphic if and only if $r_k^p(G) = r_k^p(G')$, $\chi_{p'}^k(b) = \chi_{p'}^k(b')$
%and $\sigma_k(b)  = \sigma_k(b')$ for all prime $p$, for all odd prime $p'$ and all $k \geq 1$, where
%$r_k^p(G)$ are some invariants of the group $G$ and $\chi_{p'}^k(b)$ is some invariants of the bilinear form
%$b$ that only depends on the $p'$ part $(G_{p'} , b_{p'})$.
%If $G = G'$ then the invariants $r_p^k$ agree.  If $G$ is a $2$-group,
%then the invariants $\chi_{p'}^k(b)$ and $\chi_{p'}^k(b)$ are trivial for all odd prime $p'$. So 
%$(G, b)$ and $(G', b')$ are distinguished only by the invariants $\sigma_k$.
%\end{proof}
%
\par
{\bf Definition.} It will be convenient for us to work with the invariant
\begin{equation}
\varsigma_k(b) = \be( \sigma_k(b)/8)
\label{th-def-varsigma}
\end{equation}
rather than $\sigma_k(b)$.
If $\sigma_k(b) = \infty$, then we
define  $\varsigma_k(b) = 0$. So $\varsigma_k$ takes values in the multiplicative semigroup
$\mu_8 \cup \lbrace 0 \rbrace$ where $\mu_8$ is the group of $8$-th roots of unity. 
From corollary 2.2 of \cite{KK:LP}, it follows that if $(G, b) = (G_1, b_1) \bot (G_2, b_2)$, the $\varsigma_k(b)= \varsigma_k(b_1) \varsigma_k(b_2)$.
In other words, $\varsigma_k$ is multiplicative, just like the Gauss sums or the invariant $\xi_k$.
The multiplicativity of $\varsigma_k(b)$ also follows from the next lemma.
\begin{lemma}
Let $G$ be a finite abelian $2$-group and let $b$ be a non-degenerate symmetric bilinear form on $G$. 
Let  $k \geq 1$. Then
\begin{equation*}
\Theta( G, 2^{k-1} b(x,x)) = \abs{G[2^k]}^{1/2} \varsigma_k(b). 
\end{equation*}
Let $q$ is a non-degenerate quadratic form on $G$. Then with $b = \partial q$, the above equation yields
\begin{equation}
\varsigma_k( \partial q) = \abs{ G[2^k] }^{-1/2} \Theta( G, 2^k q).
\label{eq-varsigma_n-partial-q}
\end{equation}
\label{l-sigma-and-Theta}
\end{lemma}
\begin{proof} Let $q_k(x) = 2^{k-1} b(x,x)$.
Let $w$ vary over a set of coset representatives of $G/G[2^k]$ and $y$ vary over $G[2^k]$. Then
\begin{equation}
\abs{G}^{1/2} \Theta( G, q_k) 
= \sum_{w, y} \be( q_k(w+y)  )  
= \sum_{w} \be( q_k(w)) \sum_y \be(2^{k-1} b(y,c^k(b)) ).
\label{e-tgs} 
\end{equation}
The second equality follows since $2^k b( w, y) = 0$ and $2^{k-1} b(y,y) = 2^{k-1} b(y, c^k(b))$. 
If $c^k(b) \neq 0$, then $y \mapsto \be(2^{k-1} b(y,c^k(b)) )$ is a non-trivial character on $G[2^k]$, so 
the inner sum in \eqref{e-tgs} is zero, hence $\Theta( G, 2^{k-1} b(x,x))  = 0$. 
Now suppose $c^k(b) = 0$. Then we find that
$2^{k-1}b(w,w) = 2^{k-1} b(w',w')$ if $w \equiv w' \bmod G[2^k]$. Thus,
$( w \mapsto q_k(w))$ induces a quadratic form on $G/G[2^k]$. From equation \eqref{e-tgs}, we get
\begin{equation*}
\abs{G}^{1/2} \Theta(G, q_k) 
= \abs{G[2^k]} \sum_{w \in G \slash G[2^k]} \be(q_k(w))
= \abs{G[2^k]} \sqrt{ \abs{ G/G[2^k] }} \Theta(G \slash G[2^k], q_k).
\end{equation*}
The lemma follows from equation \eqref{eqn-Theta-and-sigma}.
 \end{proof}
\begin{lemma}
Let $(G, q)$ be an irreducible metric $2$-group with $\exp(G) = 2^r$ (see table \ref{table-1}). 
Let  $\beta$ be an odd integer and $n \geq 1$.  Then
\begin{equation}
\varsigma_n( \partial q)^{ \beta 2^n} =
 \begin{cases} 0 & \text{ \; if \; } n = r \text{\;and \;} \op{rk}(G) = 1, \\
   (-1)^{\op{rk}(G) \delta_{n,2}  \delta_{r,1}  } \Theta(G,q)^{\beta 2^n} & \text{\; otherwise}.
   \end{cases}
\label{eq-pow-of-theta}
\end{equation}
where $\delta_{i,j}$ is the Kronecker delta, and 
\begin{equation}
\Theta(G, \beta 2^n q) 
= \abs{G[2^n]}^{1/2} (-1)^{\op{rk}(G) \max \lbrace r - n , 0 \rbrace (\beta^2 - 1)/8 } 
\varsigma_n(\partial q)^{\beta}.
\label{eq-theta-of-scaled-q}
\end{equation}
\label{l-theta-of-pieces-of-Tk}
\end{lemma}
\begin{proof}
Then $\op{rk}(G) = 1$ or $2$. We treat these cases separately
First suppose $G$ has rank $1$, that is,
$(G, q) \simeq ( \ZZ/ 2^r \ZZ, \alpha x^2/ 2^{r + 1} )$ where $\alpha \in \lbrace \pm 1, \pm 5 \rbrace$.
Then from lemma \ref{l-gauss-sums-of-irreducible-forms}(b), we find that 
$\Theta(G, q) = \pm \be( \alpha/8)$. Since $n \geq 1$, we have
\begin{equation}
\Theta(G, q)^{ \beta 2^n} = \be( \alpha/8)^{\beta 2^n}.
\label{eq-pow-of-theta-explicit}
\end{equation}
Now we split the argument in three cases.
\par
{\bf Case 1:} $n > r$. Then $\Theta(G, 2^n \beta q) = \abs{G}^{1/2} = \abs{G[2^n]}^{1/2}$
and so equation \eqref{eq-varsigma_n-partial-q} implies $\varsigma_n(\partial q) = 1$.
This verifies equation \eqref{eq-theta-of-scaled-q}.
From equation \eqref{eq-pow-of-theta-explicit} we obtain
$\Theta(G, q)^{ \beta 2^n} = \be( \alpha/8)^{ \beta 2^n} = (-1)^{ \delta_{n , 2} \delta_{r,1}}$.
This verifies equation \eqref{eq-pow-of-theta}.
\par 
{\bf Case 2:} $ n  =  r$. Lemma \ref{l-theta-of-scaled-form} (b) implies that 
$\Theta( G, 2^n \beta q) = 0$.
 From equation \eqref{eq-varsigma_n-partial-q} we get 
 $\varsigma_n(\partial q) = \abs{G[2^n]}^{-1/2}  \Theta( G, 2^n q) =   0$ too.
 This verifies equations \eqref{eq-pow-of-theta} and \eqref{eq-theta-of-scaled-q} in this case.
\par
{\bf Case 3:} $1 \leq n < r$.
From equation \eqref{eq-varsigma_n-partial-q} and lemma \ref{l-theta-of-scaled-form} (b), we have, 
\begin{equation*}
\varsigma_n(\partial q) 
= \abs{G[2^n]}^{-1/2} \Theta(G, 2^n q)
= 2^{-n/2} \Theta( \ZZ/2^r \ZZ, 2^n \tfrac{ \alpha x^2}{2^{r + 1}} )
=  (-1)^{(r - n)(\alpha^2 - 1)/8}  \be(\tfrac{ \alpha}{8}).
\end{equation*}
Since $n \geq 1$, using equation \eqref{eq-pow-of-theta-explicit} we obtain 
$\varsigma_n(\partial q)^{\beta 2^n} = \be(\tfrac{ \alpha}{8})^{ \beta 2^n} = \Theta( G, q)^{\beta 2^n}$ 
which verifies equation \eqref{eq-pow-of-theta}. 
To verify the expression for $\Theta( G, \beta 2^n q)$ we compute as follows: 
\begin{align*}
\Theta( G, 2^n \beta q) 
= \Theta( \ZZ/2^r \ZZ, 2^n \tfrac{\beta \alpha x^2}{2^{r+1}} ) 
& = 2^{n/2}(-1)^{(r- n)(\alpha^2 \beta^2 - 1)/8} \be(\tfrac{\beta \alpha}{8}) \\
& =  2^{n/2}(-1)^{(r- n)(\beta^2 - 1)/8}\bigl( (-1)^{(r - n)(\alpha^2 - 1)/8}  \be(\tfrac{ \alpha}{8}) \bigr)^{\beta} \\
& = 2^{n/2}(-1)^{(r- n)(\beta^2 - 1)/8}\varsigma_n(\partial q)^{\beta},
\end{align*}
where the third equality follows from the fact that for odd integers $\beta, \alpha$
\begin{equation}
	(\alpha^2\beta^2 - 1) - (\beta^2 - 1) - \beta(\alpha^2 - 1) = \beta(\beta - 1)(\alpha^2 - 1) \equiv 0 \bmod 16
	\label{eqn:alpha-beta-16}
\end{equation}
  This verifies equation \eqref{eq-theta-of-scaled-q} and finishes the argument  in the case $\op{rk}(G) = 1$.
  \par
  Now assume $\op{rk}(G) = 2$.
 If $n < r$, then equation \eqref{eq-varsigma_n-partial-q}, lemma \ref{l-theta-of-scaled-form}(a) and 
 \ref{l-gauss-sums-of-irreducible-forms}(c) gives us
$\varsigma_n(\partial q) = \pm 1$ (or else see corollary 2.2 of \cite{KK:LP}).  If $n \geq r$, then from equation \eqref{eqn-Theta-and-sigma} we obtain, 
$\varsigma_n(\partial q) = \Theta(G \slash G[2^n], 2^nq)$. 
Since $G[2^n] = G$, the Gauss sum is equal to $1$ and thus $\varsigma_n(\partial q) = 1$. 
Thus, in any case, we find that $\varsigma_n( \partial q) = \pm 1$.
Lemma \ref{l-gauss-sums-of-irreducible-forms}(c) tells us that $\Theta( G, q) = \pm 1$ as well. 
Now equation \eqref{eq-pow-of-theta} follows since $n \geq 1$.
\par
Since $\varsigma_n( \partial q) = \pm 1$, the right hands side of equation \eqref{eq-theta-of-scaled-q} becomes
\begin{equation*}
\abs{G[2^n]}^{1/2} \varsigma_n(\partial q).
\end{equation*}
 Since $G$ is of type $E_{2^r}$ or $F_{2^r}$, lemma \ref{l-rank-two-irr-q} implies $(G, \beta q) \simeq (G, q)$. So
 $(G, 2^n \beta q) \simeq (G, 2^n q)$. So equation \eqref{eq-theta-of-scaled-q} follows immediately from
equation \eqref{eq-varsigma_n-partial-q}. 
\end{proof}
\begin{lemma}
Let $(G,q)$ be a metric $2$-group. Let $n \geq 1$ and $\beta$ be an odd positive integer.  Let $\varsigma_n(\partial q)$ be the invariant introduced in equation \eqref{th-def-varsigma}.  Then
\begin{equation*}
\xi_{2^n \beta} (G, q)
= (-1)^{\Gamma_{G,\beta,n} }  \abs{G[2^n]}^{1/2} \varsigma_n(\partial q)^{(2^n-1)\beta}
\end{equation*}
where $\Gamma_{G,\beta,n}$ is an integer dependent on $G$,$\beta$, $n$ and independent of $q$.
More precisely, if we write $G \simeq \oplus_{r = 1}^{\infty} (\ZZ/2^r \ZZ)^{N_r}$, then
$\Gamma_{G, \beta, n} = \delta_{n,2} N_1 + \sum_{r = 1}^{\infty} N_r \op{max} \lbrace r - n , 0 \rbrace (\beta^2 - 1)/8$.
\label{l-two-power-ind}
\end{lemma}
\begin{proof}
Observe that both sides of the equation we want to prove are multiplicative invariants of a metric group.
Since any metric group $(G,q)$ can be decomposed into irreducibles by theorem \ref{t-wall}, it suffices to prove the
equation when $(G,q)$ is an irreducible metric group.
Assume $(G,q)$ is an irreducible metric group of exponent $2^r$; 
the the possibilities for these given in table \ref{table-1}. Note that $G$ is isomorphic to $ (\ZZ/ 2^{r} \ZZ)$ or 
$(\ZZ/ 2^{r} \ZZ)^2$ and $N_j = \delta_{j,r} \op{rk}(G)$. So the equation we want to prove becomes
\begin{equation*}
\Theta(G,q)^{\beta 2^n} \Theta(G,-\beta 2^n q) 
= (-1)^{\op{rk}(G) \delta_{n,2}  \delta_{1, r}  +  \op{rk}(G)  \op{max} \lbrace r - n , 0 \rbrace (\beta^2 - 1)/8}  
\abs{G[2^n]}^{1/2} \varsigma_n(\partial q)^{(2^n-1)\beta}.
\end{equation*}
This equation follows directly from lemma \ref{l-theta-of-pieces-of-Tk}.
\end{proof}
%
%
%********************************************************************************
%********************************************************************************
%
\section{Indicator of Tambara-Yamagami categories as Gauss sums}
\label{section-indicator}
%
%********************************************************************************
%********************************************************************************
%
Let $G$ be a finite abelian group.
A function $\chi: G \times G \to \CC^*$ is called a {\it symmetric bicharacter} on $G$ if $\chi(x, \cdot)$ and $\chi(\cdot, x)$
are characters on $G$ and $\chi(x,y) = \chi(y,x)$ for each $x,y \in G$.
A symmetric bilinear form $b$ on $G$ determines a symmetric bicharacter $\chi: G \times G \to \CC^*$ given
by $\chi(x, y) = \be(-b(x,y))$ (The minus sign in front of $b$ is for consistency with notation in \cite{S:FSI}).
This sets up a natural correspondence between bilinear forms
and bicharacters. We say $\chi$ is non-degenerate if $b$ is and vice versa. 
\par
Let $G$ be a finite abelian group, $\chi$ be a non-degenerate symmetric bicharacter on $G$ and $\tau$ be a square
root of $\abs{G}^{-1}$. Let $b$ be the bilinear form on $G$ given by $\chi(x,y) = \be(-b(x,y))$.
Given any triple $(G, \chi, \tau)$, there exists a spherical fusion category $\cC$,
called the Tambara-Yamagami category or $\op{TY}$-category for short. We shall denote this category by
$\op{TY}(G, \chi, \tau)$ or by $\op{TY}(G, b, \tau)$.
The simple objects of $\cC$ are $G \cup \lbrace m \rbrace$.
We shall write $m = m_{\cC}$ if there is a chance of confusion.
The associativity constraint in $\op{TY}(G, \chi, \tau)$ is dictated by the bicharacter $\chi$
and $\op{sign}(\tau)$. See \cite{TY:TC} or \cite{S:FSI} for more details on the $\op{TY}$-categories. 
{\bf Caution:} The abelian groups in \cite{S:FSI} are multiplicative, while for our purpose
it is convenient to write the group $G$ additively.
\par
For each simple object $x$ of a spherical fusion category and each integer $k \geq 1$, one can associate
a complex number $\nu_{k}(x)$, introduced in \cite{NS:FS},
called the $k$-th Frobenius-Schur indicator of $x$.
The lemma below tells us the indicators of the simple objects of a TY-category. 
This is an easy translation of results in \cite{S:FSI}.
Our main observation is noting that the indicators of the object $m_{\cC}$ can be 
expressed in terms of certain Gauss sums. 
%This is a simple translation of results in \cite{S:FSI}.
%
%
\begin{lemma}
Let $\cC = \op{TY}(G, \chi, \tau)$ be a $\op{TY}$-category. From theorem 3.2 of \cite{S:FSI} we know that
$\nu_k(x) = \delta_{x^k, 1}$ for $x \in G$. 
Let $b$ be the bilinear form on $G$ given by $\chi(x,y) = \be(-b(x,y))$. Let $q$ be any quadratic form such that
$\partial q = b$. Then for all $k \geq 1$, one has $\nu_{2k - 1} (m_{\cC}) = 0$ and
\begin{equation*}
\nu_{2k}( m_{\cC} ) = \op{sign}(\tau)^k \Theta(G,q)^k \Theta(G, -kq) = \op{sign}(\tau)^k \xi_k(G, q) ,
\end{equation*}
and this value does not depend on the choice of $q$. 
%In particular, $\nu_2( m_{\cC} ) = \op{sign}(\tau)$.
\label{l-nu-and-Theta}
\end{lemma}
\begin{proof} 
From theorem 3.3 of \cite{S:FSI}, we know that $\nu_{2 k  - 1}(m) = 0$.  Let
\begin{equation*}
C(\chi) = \lbrace \varphi  : G \to \CC \; : \; \varphi(x)\varphi(y)\varphi(x+y)^{-1} = \chi(x,y) \text{ for } x,y \in G \rbrace.
\end{equation*}
From the proof of theorem 3.3 of \cite{S:FSI} we have
\begin{equation}
\nu_{2 k}(m_{\cC}) 
= \frac{1}{\abs{G}} \sum_{\varphi \in C(\chi)} \left(\tau\sum_{x \in G} \varphi(x)\right)^k \sqrt{\abs{G}}.
\label{eq-from-Shimizu}
\end{equation}
By definition $\be(q) \in C(\chi)$. 
One checks that $G$ acts simply transitively on $C(\chi)$ by $a \cdot \varphi(x) = \varphi(x)\chi(a,x)^{-1}$.
So $C(\chi) = \lbrace \varphi_a \colon a \in G \rbrace$  where $\varphi_a(x) =  \be(q(x)) \chi(a,x)^{-1} $.
One has
\begin{align*}
\tau \sum_{x \in G} \varphi_a(x) &= \frac{\op{sign}(\tau)}{\sqrt{\abs{G}}}\sum_{x \in G} \be(q(x) + b(a, x) + q(a) - q(a))\\
&= \frac{\op{sign}(\tau)\be(-q(a))}{\sqrt{\abs{G}}} \sum_{x \in G} \be(q(x+a))\\
&= \op{sign}(\tau)\be(- q(a)) \Theta(G,q).
\end{align*}
From equation \eqref{eq-from-Shimizu}, it follows that 
\begin{align*}
\nu_{2 k}(m_{\cC}) 
&= \frac{\op{sign}(\tau)^k}{\sqrt{\abs{G}}} \sum_{a \in G} \be(- k q(a)) \Theta(G,q)^k\\
&= \op{sign}(\tau)^k\Theta(G,q)^k \Theta(G, -kq).
\end{align*}
%When $k=1$, using lemma \ref{l-absolute-value-of-Gauss-sum}(b), we get $\nu_2(m_{\cC} ) = \op{sign}(\tau) \Theta( G,q) \Theta( G, -q) = \op{sign}(\tau)$.
 To complete the proof observe that the expression on the right hand side of \eqref{eq-from-Shimizu} only depends on $\chi$ and is independent of the choice of $q$.
 \end{proof}
We shall need the following result from \cite{S:FSI}. 
\begin{lemma}[\cite{S:FSI}, theorem 3.5]
Let $\cC = \op{TY}(G, b, \tau)$ be a $\op{TY}$-category.  Let $q$ be a quadratic form
such that $\partial q = b$. Then $\nu_{2k }(m) = \abs{G[k]}^{1/2} \psi$
where $\psi \in \mu_8 \cup \lbrace 0 \rbrace$ (recall: $\mu_8$ denotes the set of $8$-th roots of unity).
One has $\psi = 0$ if and only if there exists $a \in G[k]$ such that
$k q(a) \neq 0$. 
\label{l-abs-nu}
\end{lemma}
%
%\begin{proof}
%The first statement is a consequence of lemmas \ref{l-nu-and-Theta} and \ref{l-two-power-ind}. 
%Let $(H,u)$ be an irreducible metric group. Observe that $\xi_k(H,u) = 0$ if and only if $\Theta(H,-k \cdot u) = 0$,
%since $\Theta(H,u)$ always have absolute value $1$.
%Lemma \ref{l-theta-of-scaled-form}(b) implies that $\Theta(H, -k.u) = 0$ only if $(H,u)$ is equal to
%$A_{2^{v_2(k)}}$, $B_{2^{v_2(k)}}$, $C_{2^{v_2(k)}}$, or $D_{2^{v_2(k)}}$ for some even $k$.
%On the other hand, decomposing $(G,q)$ into irreducibles found in table 1 makes it evident that there exists $a \in G[k]$
%such that $k q(a) \neq 0$ if and only if $(G,q) = (H,u) \perp (G', q')$ with $(H,u)$ equal to
%$A_{2^{v_2(k)}}$, $B_{2^{v_2(k)}}$, $C_{2^{v_2(k)}}$, or $D_{2^{v_2(k)}}$ for some even $k$. 
%Thus lemma \ref{l-nu-and-Theta} and the multiplicative nature of $\Theta$ complete the proof.
%\end{proof}
%
\par
{\bf Remark.} We should mention that from the values of the Gauss sums given in the previous section and
the decomposition of $(G,q)$ into irreducibles  we can show that $\xi_k(G,q) = 0$ if and only if 
$(G,q)$ contains an irreducible component which 
equals $A_{2^r}$, $B_{2^r}$, $C_{2^r}$, or $D_{2^r}$ 
where $r = v_2(k)$  for some even $k$
and this yields another proof of \ref{l-abs-nu}.
\par
Let $(G,q)$ be a pre-metric group.
The invariant $\xi_k(G,q)$ can itself be expressed as a Gauss sum as follows. Let $\mathcal{F}_k(G,q)$
denote the pre-metric group given by the abelian group
$\lbrace (g_1, \dotsb, g_k) \in G^k \colon \sum_{j} g_j = 0 \rbrace$ with the quadratic form
$q(g_1, \dotsb, g_k) = \sum_{j} q(g_j)$. Then one can show that $\xi_k(G,q) = \mathcal{F}_k(G,q)$.
In view of this formula, the appearance of the $8$-th root of unity $\psi$ in the above lemma
becomes a consequence of Milgram's formula. 
%
%NOTE: Example below is now redundant after changes to lemma l-theta-of-scaled-form
%\begin{example}
% \label{ex-nu-is-zero}
% Assume the setup of lemma \ref{l-abs-nu}. Suppose the order of $G$ is odd.
%  Then the exponent of $G[k]$ is a factor of $k_1 = k/ 2^{v_2(k)}$. 
% As we saw in definition \ref{def-bq}, the quadratic form $q$ restricted to $G[k]$, then takes values in
% $k_1^{-1} \ZZ/ \ZZ$. So for all $a \in G[k]$, we have $k_1q(a) = 0$, a fortiori $kq(a) =0$.
% So if $G$ has odd order then $\nu_{2k}(m)$ is always non-zero.
% \par
% On the other hand, consider the quadratic form on $G = \ZZ/2^{r} \ZZ$ given by
% $q(x) = \alpha x^2/ 2^{r+1}$ with $\alpha \in \lbrace \pm 1, \pm 5 \rbrace$.
% Let $a$ be a generator of $\ZZ/2^{r} \ZZ$. Let $k$ be an positive integer such that $v_2(k) = r$.
% Then $k a = 0$ but $k q(a) \neq 0$, so $\Theta( \cF_{2k} (G, q)) = \nu_{2k} (m) = 0$.  Now consider an arbitrary metric group $(G,q)$.
% Decomposing $(G,q)$
%into irreducible components and using the multiplicativity of $\Theta$ we
%conclude that if an irreducible decomposition of 
%$(G,q)$ contains a form of type $A_{2^r}, B_{2^r}, C_{2^r}$ or  $D_{2^r}$ 
%then $\nu_{2k} (m) = 0$ for all $k$ with $v_2(k) = r$.
% \end{example}
%
%********************************************************************************
%********************************************************************************
%
\section{Tambara-Yamagami categories are determined by the higher Frobenius Schur-indicators}
\label{section-TY}
%
%********************************************************************************
%********************************************************************************
%
In this section we shall prove theorem \ref{t-indicators-determine-b}.
Let $\cC = \op{TY}(G,\chi, \tau)$  be a $\op{TY}$-category.
We shall show that
the Frobenius-Schur indicators of the simple objects of $\cC$ determine the triple $(G, \chi, \tau)$.
So the indicators can distinguish between any two $\op{TY}$-categories.
Most of the work goes into showing that the indicators $\nu_k(m_{\cC})$ determine the
bicharacter $\chi$.
Let $q$ be a quadratic form on $G$ such that $\chi(x,y) = \be( - \partial q(x,y))$.
Then lemma \ref{l-nu-and-Theta} gives $\nu_k(m_{\cC}) = \op{sgn}(\tau)^k \xi_k(G,q)$
where $\xi_k(G,q)$ is a product of quadratic Gauss sums.
Based on computations in section \ref{section-Gauss-sum},
we shall argue that the invariants $\xi_k(G,q)$ determine the bicharacter $\chi$.
We need a couple of lemmas before proving theorem \ref{t-indicators-determine-b}.
The lemmas let us handle special cases.
\begin{lemma}
Let $G$ be an abelian group of odd order. 
Let $b_1$ and $b_2$ be two non-isometric non-degenerate symmetric bilinear forms on $G$.
Let $q_1$ and $q_2$ be quadratic forms such that $\partial q_j= b_j$ for $j=1,2$. 
Then either there exists an odd positive integer $k$ such that
$\xi_k(G , q_1) \neq \xi_k(G , q_2)$, or else, for each natural number $\gamma $, there exists a
positive integer $k$ with $v_2(k) = \gamma$ and $\xi_k(G , q_1) \neq \xi_k(G , q_2)$.
\label{l-dop}
\end{lemma}
\begin{proof} 
Fix a non-square $u_p$ modulo $p$ for each odd prime $p$. Recall from table 1
\begin{equation*}
A_{p^r} = \Bigl(\ZZ/p^r \ZZ, q(x)  = \tfrac{2^{-1} x^2}{p^{r}} \Bigr)
\text{\; and \;} 
B_{p^r} = \Bigl(\ZZ/p^r \ZZ, q(x)  = \tfrac{2^{-1} u_p x^2}{p^{r}} \Bigr).
\end{equation*}
We will also use the notation
\begin{equation*}
n \cdot A_{p^r} = \Bigl(\ZZ/p^r \ZZ, q(x)  = \tfrac{2^{-1} nx^2}{p^{r}} \Bigr)
\text{\; and \;} 
n \cdot B_{p^r} = \Bigl(\ZZ/p^r \ZZ, q(x)  = \tfrac{2^{-1} u_p n x^2}{p^{r}} \Bigr).
\end{equation*}
for $n \in \ZZ$.  Write $G \simeq \oplus_{p,r}  (\ZZ/ p^r \ZZ)^{N_{p,r}}$
where $p$ ranges over odd primes and $r \geq 1$. Since 
$A_{p^r} \bot A_{p^r} \simeq B_{p^r} \bot B_{p^r}$ (see \cite{W:QF}, theorem 4), 
the metric group $(G,q_j)$ is an orthogonal direct sum, over all $(p,r)$ such that $N_{p,r} \neq 0$, of
the homogeneous metric groups
\begin{equation*}
 A_{p^r}^{N_{p,r} - 1} \bot C_{p,r}^j, 
\end{equation*}
where $C_{p,r}^j$ is either $A_{p^r}$ or $B_{p^r}$.
Since $\xi_k$ is multiplicative, we have
\begin{equation}
\xi_k(G,q_j) = \prod_{p,r : N_{p,r} \neq 0}   \xi_k( A_{p^r})^{N_{p,r} - 1}  \xi_k( C_{p,r}^j)
\label{eq-Theta-long-prod}
\end{equation}
 Let 
\begin{equation*}
\cA = \lbrace (p,r) \colon N_{p,r} \neq 0, C_{p,r}^1 \neq C_{p,r}^2 \rbrace
\text{\; and \;}
\cA_{\max} = \lbrace (p,r) \in \cA \colon (p, r') \notin \cA \text{\; for all \;} r' > r \rbrace.
\end{equation*}
If $(p,r) \notin \cA$, then the $(p,r)$-th term in the product in equation \eqref{eq-Theta-long-prod} 
is the same for $j = 1, 2$. If $(p,r) \in \cA$, then
the $(p,r)$-th terms differs by a factor $(-1)^{\epsilon_{p,r}^k}$ given in lemma \ref{l-q1q2s}.
It follows that
 \begin{equation*}
 \xi_k(G, q_1)  = (-1)^{\Lambda}  \xi_k(G, q_2)
 \text{\; where \;\;} 
  \Lambda = \sum_{(p,r) \in \cA} \epsilon_{p,r}^k.
  \end{equation*}
{\bf Case 1: } If there is a prime $p$ such that $(p,1) \in \cA_{\max}$ then choose such a prime $p_0$ and let
$k = p_0$.  We find
\begin{equation*}
\sum_{r: (p_0,r) \in \cA} \epsilon_{p_0,r}^k = \epsilon_{p_0,1}^k = 1(k+1)  - \min\lbrace 1, v_{p_0}(k) \rbrace
= p_0  \equiv 1 \bmod 2.
\end{equation*}
For all prime $(p, r) \in \cA$ such that $p \neq p_0$ we have
$\epsilon_{p,r}^k = r(p_0 +1) \equiv 0 \bmod 2$.
It follows that 
$\Lambda \equiv 1 \bmod 2$, so 
$ \xi_k(G, q_1) \neq \xi_k(G, q_2)$.
\par
{\bf Case 2: } Otherwise, choose $(p_0,r_0) \in \cA_{\max}$ such that $r_0 > 1$. Choose any $\gamma \geq 1$
and let 
\begin{equation*}
k = 2^{\gamma} p_0^{-1} \prod_{(p,r) \in \cA_{\max} } p^{r}.
\end{equation*}
Note that $k$ is an integer with $v_2(k) = \gamma$ and $v_{p_0}(k) = r_0 - 1$. 
One has
\begin{equation*}
\epsilon_{p_0, r_0}^k =r_0( k + 1) - \min\lbrace r_0, v_{p_0}(k) \rbrace 
 \equiv r_0 - (r_0 - 1) = 1 \bmod 2.
\end{equation*}
If $r < r_0$, then $r \leq v_{p_0} (k)$, so 
$\epsilon_{p_0,r}^k = r( k - 1) - r \equiv 0 \bmod 2$. 
Finally if $p \neq p_0$, then $(p, r) \in \cA$ implies $r \leq v_p(k)$
by our choice of $k$, so $\epsilon_{p,r}^k = r( k + 1) - r \equiv 0 \bmod 2$. 
Again, $\Lambda \equiv 1 \bmod 2$, so 
$\xi_k(G,q_1) \neq \xi_k(G,q_2)$.
 \end{proof}
\begin{lemma}
Let $b$ and $b'$ be two non-degenerate symmetric bilinear forms on a finite abelian $2$-group $G$.
Let $q$ and $q'$ be quadratic forms such that $\partial q = b$ and $\partial q'  = b'$.
Let $k$ be a positive integer such that 
$v_2(k) = 0$ or $v_2(k) > \max\{2, v_2( \exp(G)) \}$. Then 
$\xi_k(G,q) = \xi_k(G,q')$.
\label{l-even-parts-agree}
\end{lemma}
\begin{proof} 
By structure theorem of finite abelian groups and by theorem \ref{t-wall}, we can decompose $G$
and $(G,q)$ respectively as
\begin{equation*}
G \simeq \oplus_{r=1}^{\infty} (\ZZ/2^r \ZZ)^{N_r} \text{\; \; and \; \;} 
(G, q) \simeq (H_1, \mu_1) \bot \dotsb \bot (H_m, \mu_m)
\end{equation*}
where each  $H_i \simeq \ZZ/2^{r_i} \ZZ$ or $H_i \simeq (\ZZ/2^{r_i} \ZZ)^2$ 
and $\mu_i$ is an irreducible non-degenerate quadratic form on $H_i$.
\par
Suppose $k$ is odd. By lemmas \ref{l-gauss-sums-of-irreducible-forms}(b) and \ref{l-theta-of-scaled-form},
if If $(H_i, \mu_i) \cong (\ZZ/2^{r_i}\ZZ, \alpha x^2 / 2^{r_i+1})$, then
\begin{equation*}
\xi_k(H_i, \mu_i) =(-1)^{kr_i(\alpha^2-1)/8} \be(\alpha/8)^k (-1)^{r_i(k^2\alpha^2 - 1)/8}\be(-k\alpha/8).
\end{equation*}
Using equation \eqref{eqn:alpha-beta-16} this simplifies to
\begin{equation*}
\xi_k(H_i, \mu_i) = (-1)^{r_i(k^2-1)/8}.
\end{equation*}
By lemma \ref{l-gauss-sums-of-irreducible-forms} if 
$(H_i, \mu_i) \cong ((\ZZ/2^{r_i}\ZZ)^2, (\alpha x_1^2 + x_1x_2 + \alpha x_2^2)/2^{r_i})$ 
with $\alpha \in \lbrace  0, 1 \rbrace$, then
\begin{equation*}
\xi_k(H_i, \mu_i)  = (-1)^{\alpha^2 r_i k} (-1)^{(-k \alpha)^2 r_i} 
= (-1)^{ \alpha r_i k + \alpha r_i k^2 } = 1.
\end{equation*}
We summarize both cases with the equation
\begin{equation*}
\xi_k(H_i, \mu_i)  = (-1)^{\op{rk}(H_i) r_i(k^2-1)/8}.
\end{equation*}
Summing over all $i$ such that $r_i = r$ yields $\sum_i \op{rk}(H_i) r_i = \sum_r r N_r$. So
\begin{equation*}
\xi_k(G, q)  = (-1)^{ \sum_r r N_r(k^2-1)/8}.
\end{equation*}
The expression for $\xi_k(G,q)$ does not depend on $q$, so we get
$\xi_k(G,q) = \xi_k(G,q')$ for $k$ odd.
\par
Now suppose $k = 2^n \beta$ with $\beta$ odd and $ n > \max\{2, v_2( \exp(G)) \}$.
Then $\op{max} \lbrace r - n , 0 \rbrace = 0$ for all $r$ such that 
$N_r > 0$.  Since $n > v_2(\exp(G))$, the quadratic forms $2^{n-1}b(x,x)$ and
$2^{n-1}b'(x,x)$ are identically equal to zero, so lemma \ref{l-sigma-and-Theta} implies
that $\varsigma_n(b)  = \varsigma_n(b') = 1$.
From lemma \ref{l-two-power-ind}, we get
\begin{equation*}
\xi_{2^n \beta} (G, q)
= \abs{G[2^n]}^{1/2}\varsigma_n(b)^{(2^n-1)\beta}
= \abs{G}^{1/2}.
\end{equation*}
Thus $\xi_{2^n \beta}(G,q)$ does not depend on $q$ and we get  $\xi_{2^n \beta}(G, q) = \xi_{2^n \beta}(G, q')$.
 \end{proof}
Now we are ready to proof theorem  \ref{t-indicators-determine-b}.
\begin{proof}[proof of theorem  \ref{t-indicators-determine-b}]
Write $\cC_1 = \op{TY}(G_1,b_1, \tau_1)$ and $\cC_2 = \op{TY}(G_2, b_2, \tau_2)$.
Let $m_1 = m_{\cC_1}$ and $m_2 = m_{\cC_2}$.
One knows that
 $\op{pdim}(x) = 1$ for $x \in G_j$ and $\op{pdim}(m_j) = \sqrt{\abs{G_j}}$.
 So the hypothesis in the theorem yields
 \begin{equation}
 (\sqrt{\abs{G_1} } - 1)\nu_{k}(m_1) = (\sqrt{\abs{G_2}}  - 1)\nu_{k}(m_2) \text{\; for all \;} k \geq 1.
 \label{eq-gm12}
 \end{equation}
 Lemma \ref{l-nu-and-Theta} implies that if $k$ is a multiple of $8 \abs{G_1} \abs{G_2}$, then
  $\nu_{k}(m_j) = \sqrt{\abs{G_j}}$ for $j = 1,2$.
It follows that  $(\sqrt{\abs{G_1}} - 1)\sqrt{\abs{G_1}}  = (\sqrt{\abs{G_2}} - 1)\sqrt{\abs{G_2}}$ and hence
$\abs{G_1} = \abs{G_2}$. 
\par
First consider the trivial case: $\abs{G_1} = \abs{G_2} = 1$. Then the bilinear forms $b_1$ and $b_2$ are
trivial. So there are only two such $\op{TY}$-categories which are only distinguished by
the value of $\tau \in \lbrace \pm 1 \rbrace$.
We know $\sum_{x \in G_j} \nu_k(x) = \abs{ G_j[k]}$ and $\op{sign}(\tau_j) = \nu_2(m_{\cC_j})$ 
(see theorem 3.2 of \cite{S:FSI} and the remark following the proof of theorem 3.4 of \cite{S:FSI}. Or else,
see lemma \ref{l-nu-and-Theta}).
It follows that $1 + \op{sign}(\tau_1) = \sum_{V \in \op{Irr}{\cC_1}} \nu_2(V) = 
\sum_{V \in \op{Irr}{\cC_2}} \nu_2(V)  = 1 +  \op{sign}(\tau_2)$. So $\op{sign}(\tau_1) = \op{sign}(\tau_2)$.
So the theorem holds in the the trivial case.
\par
We may now assume that $\abs{G_1}  = \abs{G_2} > 1$.
Equation \eqref{eq-gm12} implies $\nu_k(m_1) = \nu_k(m_2)$ and hence 
$\sum_{x \in G_1} \nu_k(x) = \sum_{x \in G_2} \nu_k(x)$ for all $k \geq 1$.
It follows that $\abs{G_1[k]} = \abs{G_2[k]}$  for each $k \geq 1$.
This forces $G_1 \simeq G_2$, and so we may assume without loss of generality that $G_1 = G_2 = G$.
By \cite{S:FSI}, $\op{sign}(\tau_j) = \nu_2(m_{\cC_j})$, and so it follows that $\tau_1 = \tau_2$.  Assume that $b_1$ and $b_2$ are non-isomorphic. 
\par
Write $G = G_e \oplus G_o$ where $G_e$ is the $2$-Sylow subgroup of $G$ and $G_o = \oplus_{p \neq 2} G_{(p)}$
is the ``odd part".
Then $(G, b_j) = (G_o, b_j^o) \bot (G_e, b_j^e)$.
Choose quadratic forms $q_j^o$ and $q_j^e$ such that $b_j^o = \partial q_j^o$
and $b_j^e = \partial q_j^e$. Then $q_j = q_j^o \bot q_j^e$ is a quadratic form such that $\partial q_j = b_j$.
By lemma \ref{l-nu-and-Theta}, it is enough to show that 
$\xi_k(G, q_1) \neq \xi_k(G, q_2)$ for some $k$.
Since $\xi_k$ is multiplicative for $j \in \{1,2\}$ we have
\begin{equation*}
\xi_k( G, q_j) = \xi_k( G_o , q_j^o) \xi_k(G_e, q_j^e).
\end{equation*}
We split the argument in two cases. 
\newline
{\bf Case 1: } Suppose  $b_1^o \ncong b_2^o$.  Then lemma \ref{l-dop} implies that
there is an integer $k > 1$ which is either odd or $v_2(k) > \max\lbrace 2, v_2( \exp(G_e)) \rbrace$ such that
$\xi_k( G_o , q_1^o) \neq \xi_k( G_o , q_2^o)$ and lemma \ref{l-even-parts-agree} implies that
$\xi_k( G_e , q_1^e) =  \xi_k( G_e , q_2^e)$. 
So $\nu_{2k}(m_1) \neq \nu_{2k}(m_2)$ if $b_1^o \ncong b_2^o$.
\par
{\bf Case 2: } Suppose $b_1^o \cong b_2^o$. 
 In this case
we must have $b_1^e \ncong b_2^e$. From theorem \ref{t-KK-quote} (quoted from \cite{KK:LP}),
we know that exists some $n \geq 1$ such that $\sigma_n(b_1^e) \neq \sigma_n(b_2^e)$, which implies $\varsigma_n(b_1^e) \neq \varsigma_n(b_2^e)$.
  Now lemma \ref{l-two-power-ind} implies that 
\begin{equation*}
\xi_{2^n}( G_e, q_j^e) = 
 (-1)^{\Gamma_{G_e, 1, n}}\abs{G_e[2^n]}^{1/2}  \varsigma_n(b_j^e)^{2^n-1}
\end{equation*}
where $\Gamma_{G_e,1,n}$ is an integer dependent on $G_e$ and $n$ but is independent of $q_j^e$.
It follows that $\xi_{2^n}( G_e, q_1^e) \neq \xi_{2^n} (G_e, q_2^e)$.
On the other hand, since 
$(G_o, b_1^o) \cong (G_o, b_2^o)$, 
we have $\xi_{2^n}( G_o, q_1^o) = \xi_{2^n}(G_o, q_2^o)$.
So $\nu_{2^{n+1}}( m_1) \neq \nu_{2^{n+1}}( m_2)$.
 \end{proof}
%
%********************************************************************************
%
%
%********************************************************************************
%********************************************************************************
%
\section{ Tambara-Yamagami categories associated to groups with an odd factor are determined by the state sum invariants}
\label{section-statesum}
%
%********************************************************************************
%********************************************************************************
%
Let $G$ be a finite abelian group, $\chi$ be a non-degenerate symmetric bicharacter on $G$ and $\tau$ be a square
root of $\abs{G}^{-1}$. Let $\cC = \op{TY}(G, \chi, \tau)$ denote the associated Tambara-Yamagami category.
If $M$ is a closed compact 3-manifold, we denote by $\abs{M}_{\cC}$ the 
state-sum invariant of $M$ defined using the category $\cC$, as in \cite{TV:TY}. 
Let $L_{m,n}$ denote the lens spaces.
\begin{lemma}
For all $k \geq 1$, one has $\abs{L_{k,1}}_{\cC} = (\abs{G[k]} + \abs{G}^{1/2} \nu_{k}(m_{\cC} ) )/(2\abs{G})$. 
\label{l-ssi-ind}
\end{lemma}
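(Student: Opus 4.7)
The plan is to reduce the lemma to a general formula for the state-sum invariant of $L_{k,1}$ in any spherical fusion category $\cC$, namely
\begin{equation*}
\abs{L_{k,1}}_{\cC} = \frac{1}{\dim(\cC)} \sum_{X \in \op{Irr}(\cC)} \dim(X)\, \nu_k(X), \qquad (\ast)
\end{equation*}
and then specialize to the Tambara-Yamagami setting. To establish $(\ast)$ I would invoke the identification $\abs{M}_{\cC} = \tau_{Z(\cC)}(M)$ recalled in the introduction, apply the standard Reshetikhin-Turaev surgery formula to the presentation of $L_{k,1}$ as $k/1$ Dehn surgery on the unknot in $S^3$ (this expresses $\tau_{Z(\cC)}(L_{k,1})$ as a sum over $\op{Irr}(Z(\cC))$ weighted by squared quantum dimensions and the $k$-th power of the twist), and then collapse the sum to one over $\op{Irr}(\cC)$ by means of the Ng-Schauenburg description of $\nu_k(X)$ as a trace of $\theta^k$ on the induced object $I(X) \in Z(\cC)$ (\cite{NS:FSE}, theorem 4.1), together with Frobenius reciprocity between induction $I$ and the forgetful functor $Z(\cC) \to \cC$. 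A quick sanity check for $\cC = \op{Rep}(G)$ recovers $\abs{L_{k,1}}_{\op{Rep}(G)} = \abs{G[k]} / \abs{G}$, in agreement with the untwisted Dijkgraaf-Witten answer.

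Granting $(\ast)$, the remainder of the proof is routine substitution. The simple objects of $\cC = \op{TY}(G, \chi, \tau)$ are $G \cup \{m_{\cC}\}$, with $\dim(g) = 1$ for $g \in G$ and $\dim(m_{\cC}) = \abs{G}^{1/2}$, so $\dim(\cC) = \abs{G} + \abs{G} = 2 \abs{G}$. Using the formula $\sum_{x \in G} \nu_k(x) = \abs{G[k]}$ (theorem 3.2 of \cite{S:FSI}, recalled in the introduction), $(\ast)$ becomes
\begin{equation*}
\abs{L_{k,1}}_{\cC} = \frac{1}{2 \abs{G}} \Bigl( \abs{G[k]} + \abs{G}^{1/2} \nu_k(m_{\cC}) \Bigr),
\end{equation*}
exactly as claimed.

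The main obstacle is the careful justification of $(\ast)$: each of the three ingredients (identification of the state-sum with an RT invariant on $Z(\cC)$, the surgery formula for lens spaces, and the center-trace description of $\nu_k$) is standard, but matching ribbon, pivotal, and sign conventions requires bookkeeping. A shortcut available here is to invoke the direct computation of $\abs{L_{k,1}}_{\op{TY}(G,\chi,\tau)}$ already carried out by Turaev and Vainerman in \cite{TV:TY} and then match their closed-form answer against $\nu_k(m_{\cC})$ using lemma \ref{l-nu-and-Theta}; this replaces the general geometric derivation of $(\ast)$ by a purely algebraic verification that two explicit expressions coincide.
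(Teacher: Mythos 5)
Your proposal is correct, and in fact it contains the paper's proof as its closing ``shortcut'': the paper establishes the lemma exactly by citing Turaev--Vainerman's explicit computation (theorem 0.3 of \cite{TV:TY}, which expresses $\abs{L_{2k,1}}_{\cC}$ through a sum $\zeta_k(\chi)$ over $C_\chi$) and matching it against $\nu_k(m_{\cC})$ via lemmas \ref{l-powsum} and \ref{l-nu-and-Theta}. Your primary route through the general identity $(\ast)$ is a genuinely different derivation, and it reverses the paper's logical order: the paper proves the lemma first and then records $(\ast)$ (specialized to $\op{TY}$-categories) as the corollary immediately following, deducing it from the lemma together with $\sum_{x \in G}\nu_k(x) = \abs{G[k]}$, whereas you would prove $(\ast)$ directly from $\abs{M}_{\cC} = \tau_{Z(\cC)}(M)$, the surgery presentation of $L_{k,1}$, theorem 4.1 of \cite{NS:FSE}, and Frobenius reciprocity for the induction to $Z(\cC)$. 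What your route buys is generality and conceptual transparency ($(\ast)$ holds for any spherical fusion category, with the framing anomaly vanishing because $Z(\cC)$ has trivial central charge); what it costs is exactly the convention bookkeeping you acknowledge, including the orientation issue of $\theta^{k}$ versus $\theta^{-k}$ in the surgery formula, which the paper sidesteps entirely by leaning on the already-published closed form. Your specialization step ($\dim(\cC) = 2\abs{G}$, $\dim(m_{\cC}) = \abs{G}^{1/2}$, and $\sum_{x\in G}\nu_k(x) = \abs{G[k]}$) is correct and matches the paper's corollary.
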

This lemma follows directly from theorem 0.3 of \cite{TV:TY} as well as lemma \ref{l-nu-and-Theta}.  Theorem 0.3 of \cite{TV:TY} expresses $\abs{L_{2k,1}}_{\cC}$ in terms of a quantity $\zeta_k(\chi)$ which is essentially the right hand side of the equation in lemma \ref{l-nu-and-Theta}.  %As a corollary to lemma \ref{l-ssi-ind}, the state-sum indicator is equal to a linear sum of the Frobenius-Schur indicator and the pivotal dimension of simple objects of the category.

%
%\begin{proof}
%If $k$ is odd, then from theorem 0.3 of \cite{TV:TY},  we know
%$\abs{L_{k,1}}_{\cC} = \abs{G[k]}/(2 \abs{G})$ and we also know $\nu_{k}(m_{\cC}) = 0$; so the lemma holds. 
%\par
%Let $\mu \in C_{\chi}$.  Let $\gamma(\mu)$ and $\zeta_k(\chi)$ be as in as defined in section 0 of \cite{TV:TY}.
%Since $\chi$ is non-degenerate, $\gamma(\mu) = \abs{G}^{-1/2} \sum_{x \in G} \mu(x)$.
%The quantity $\zeta_k(\chi)$ is defined as
%\begin{equation*}
%\zeta_k(\chi) = \abs{G}^{-1/2} \abs{G[k]}^{-1/2} \sum_{\mu \in C_{\chi}} \gamma(\mu)^k.
%\end{equation*}
%From lemma \ref{l-powsum} and the formula relating $\Theta(\cF_k(G, q))$ and $\nu_{2k}(m_{\cC})$, we obtain
%$\zeta_k(\chi) = (\op{sign}(\tau))^k \abs{G[k]}^{-1/2} \nu_{2k}(m_{\cC})$ (also see \cite{TV:TY}, remark 2.2).
%From theorem 0.3 of \cite{TV:TY}, we have 
%\begin{equation*}
%\abs{L_{2k,1}}_{\cC} 
%= (\abs{G[2k]} + (\op{sign}(\tau))^k \abs{G}^{1/2} \abs{G[k]}^{1/2} \zeta_k(\chi) )/(2\abs{G}).
%\end{equation*}
%The lemma follows.
%\end{proof}
%
\begin{corollary}
For all $k \geq 1$, one has $\abs{L_{k,1}}_{\cC} = (\op{pdim}(\cC))^{-1} \sum_{V \in \op{Irr}(\cC)} \nu_k(V) \op{pdim}(V)$.
\end{corollary}
The corollary follows from theorem 3.2 of \cite{S:FSI}, which implies $\sum_{x \in G} \nu_k(x) = \abs{G[k]}$.
\begin{theorem}
Let $\cC = \op{TY}(G, \chi, \tau)$ and $\cC'  = \op{TY}(G', \chi', \tau')$ be any two $\op{TY}$-categories. Suppose $\abs{G}$
is not a power of $2$.
If $\abs{L_{k,1}}_{\cC} = \abs{L_{k,1}}_{\cC'}$ for all $k \geq 1$, then $\cC \simeq \cC'$. 
\label{t-Lens}
\end{theorem}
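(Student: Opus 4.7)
The plan is to reduce to theorem \ref{t-indicators-determine-b}: by lemma \ref{l-ssi-ind}, each hypothesis $\abs{L_{k,1}}_\cC=\abs{L_{k,1}}_{\cC'}$ relates $\abs{G[k]}$ and $\nu_k(m_\cC)$ to their primed counterparts, and it suffices to extract from this sequence of equations the two families of equalities $\abs{G[k]}=\abs{G'[k]}$ and $\nu_k(m_\cC)=\nu_k(m_{\cC'})$ for all $k$.

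First, $k=1$ gives $\abs{G}=\abs{G'}=:N$, using $\nu_1(m_\cC)=0$ from lemma \ref{l-nu-and-Theta}. For odd $k$, both $\nu_k(m_\cC)$ and $\nu_k(m_{\cC'})$ vanish, so lemma \ref{l-ssi-ind} directly gives $\abs{G[k]}=\abs{G'[k]}$. Writing $G=G_e\oplus G_o$ with $G_e$ the $2$-Sylow subgroup, one has $G[k]=G_o[k]$ for odd $k$; this forces $G_o\simeq G'_o$ and hence $\abs{G_e}=\abs{G'_e}$.

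For even $k$ I proceed by strong induction, assuming both equalities for all $k'<k$. Rearranging the state-sum equation and using lemma \ref{l-abs-nu} to write $\nu_k(m_\cC)=\xi\sqrt{\abs{G[k/2]}}$ and $\nu_k(m_{\cC'})=\xi'\sqrt{\abs{G[k/2]}}$ for some $\xi,\xi'\in\mu_8\cup\{0\}$ (the two radicands agreeing by inductive hypothesis), one obtains
\begin{equation*}
\abs{G[k]}-\abs{G'[k]}=(\xi'-\xi)\sqrt{N\cdot\abs{G[k/2]}}.
\end{equation*}
The left side is a rational integer, and because the square root is a positive real, $\xi'-\xi$ must be real; a short case analysis within $\mu_8\cup\{0\}$ then shows $\xi'-\xi\in\{0,\pm 1,\pm 2,\pm\sqrt{2}\}$, so $(\xi'-\xi)^{2}\in\{0,1,2,4\}$.

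The main obstacle is ruling out the nonzero values of $\xi'-\xi$; example \ref{ex-64} shows this cannot be done without the hypothesis that $\abs{G}$ is not a power of $2$. Writing $k=2^n a$ with $a$ odd and cancelling the odd-part factor $\abs{G_o[a]}=\abs{G'_o[a]}$ on both sides, the squared equation reads
\begin{equation*}
(\abs{G_e[2^n]}-\abs{G'_e[2^n]})^{2}=\abs{G_e}\cdot\abs{G_e[2^{n-1}]}\cdot[G_o:G_o[a]]\cdot(\xi'-\xi)^{2},
\end{equation*}
whose left side is a perfect square. The nontrivial odd factor $\abs{G_o}>1$ supplies the flexibility (by varying $a$ and $n$) to arrange the right side to have either an odd-prime factor appearing to an odd power (via $[G_o:G_o[a]]$) or an odd $2$-adic valuation (via $\abs{G_e}\cdot\abs{G_e[2^{n-1}]}$), contradicting the left side unless $\xi'=\xi$. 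This simultaneously forces $\abs{G[k]}=\abs{G'[k]}$, closing the induction, and theorem \ref{t-indicators-determine-b} then gives $\cC\simeq\cC'$.
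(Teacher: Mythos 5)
Your reduction to Theorem \ref{t-indicators-determine-b} and the first two thirds of the argument track the paper's proof closely: the paper also extracts $G_o \simeq G'_o$ and $\abs{G_e} = \abs{G'_e}$, isolates $\abs{G[k]} - \abs{G'[k]}$ against $(\xi' - \xi)$ times a square root, and exploits the arithmetic of $\mu_8 \cup \{0\}$ (the paper does this by computing absolute norms in $\ZZ[e^{2\pi i/8}]$, where your squaring trick and the observation $(\xi'-\xi)^2 \in \{0,1,2,4\}$ is an acceptable substitute). The derivation of the squared identity
\begin{equation*}
(\abs{G_e[2^n]}-\abs{G'_e[2^n]})^{2}=\abs{G_e}\cdot\abs{G_e[2^{n-1}]}\cdot[G_o:G_o[a]]\cdot(\xi'-\xi)^{2}
\end{equation*}
is also correct. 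The gap is in the last step, where the contradiction is supposed to appear. Neither horn of your dichotomy works as stated. First, $[G_o:G_o[a]]$ need not ever contain an odd prime to an odd power: for $G_o = (\ZZ/3\ZZ)^2$ the index $[G_o:G_o[a]]$ is $1$ or $9$ for every $a$, always a perfect square, and the left side can have odd part $9$ as well (namely when $\abs{G_e[2^n]} = 4\abs{G'_e[2^n]}$, since $(2^2-1)^2 = 9$). Second, the quantity $\abs{G_e}\cdot\abs{G_e[2^{n-1}]}$ is fixed once $n$ is fixed --- there is nothing to ``arrange'' --- and even if its $2$-adic valuation is odd, the factor $(\xi'-\xi)^2 = 2$ can restore parity, so no contradiction follows from a single equation.

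The fix, which is essentially what the paper does, is to compare \emph{two} values of $a$ at the same level $n$ rather than inspect one equation. Taking $a$ a multiple of $\exp(G_o)$ makes $[G_o:G_o[a]] = 1$, so the right side becomes a power of $2$; since the left side is $2^{2t}(2^{\abs{s-t}}-1)^2$ with $2^s = \abs{G_e[2^n]}$ and $2^t = \abs{G'_e[2^n]}$, this forces $\abs{s-t} \le 1$, i.e.\ the odd part of the left side equals $1$ (this is the paper's intermediate conclusion $\abs{G[2^{n+1}]} = 2\abs{G'[2^{n+1}]}$). Then taking $a = 1$ gives $[G_o:G_o[a]] = \abs{G_o}$, so the right side has odd part $\abs{G_o} > 1$ while the left side still has odd part $1$; the only escape is $\xi' = \xi$ for that $k$ and both sides zero, contradicting $s \ne t$. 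Note that the left side is independent of $a$, which is exactly why comparing two choices of $a$ is decisive, whereas your argument tries to extract the contradiction from one equation at a time. You should also restate the induction as being on $n = v_2(k)$ rather than on $k$, since the equation for $k = 2^n\exp(G_o)$ requires the hypothesis at $2^{n-1}\exp(G_o) > 2^n$, which a size-ordered strong induction has not yet supplied (it does hold, but only because torsion sizes factor through the $2$-part and the odd part separately).
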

\begin{proof}
 Let $G_{e}$ (resp. $G'_{e}$) be the $2$-Sylow subgroups of $G$ (resp. $G'$). Let 
$G_o$ (resp. $G'_o$) be the sum of the $p$-Sylow subgroups for all odd $p$.
From theorem 0.1 of \cite{TV:TY} we already know that $\abs{G} = \abs{G'}$ and that the $p$-Sylow subgroups of $G$ and $G'$
are isomorphic for all odd $p$. It follows that $\abs{G_e} = \abs{G'_e}$.
We claim that $G_e \simeq G'_e$ as well.
The claim implies $G \simeq G'$ and then lemma \ref{l-ssi-ind} tells us $\nu_k(m_{\cC}) = \nu_k(m_{\cC'})$ for all $k$,
which forces $\chi \simeq \chi'$ by theorem \ref{t-indicators-determine-b}.
Thus to complete the proof we need to show $G_e \simeq G'_e$.
For this it suffices to show that $ \abs{ G[2^n]} =  \abs{ G'[2^n]}$ for all $n \geq 0$.
Suppose this is false. Since
$\abs{G[2^0]} = \abs{G'[2^0]} = 1$,  we may pick
 the smallest $n \geq 0$ such that $ \abs{ G[2^{n+1}]} > \abs{ G'[2^{n+1}]}$ (without loss of generality) and
$ \abs{ G[2^m]} =  \abs{ G'[2^m]}$ for all $m \leq n$. 
\par
Let $a = \abs{G_o} = \abs{G'_o}$.
Let $n \geq 0$.  Then $G[2^n a] = G_o \oplus G[2^n]$. By lemma \ref{l-abs-nu}, we
can write  $\nu_{2^{n+1} a}(m_{\cC} ) = \abs{ G[2^n a] }^{1/2} \psi_n$, where $\psi_n \in \mu_8 \cup \lbrace 0 \rbrace$.
Define $\psi'_n$ similarly for $\cC'$. 
We have
\begin{equation*}
2 \abs{G} \abs{L_{2^{n+1} a, 1} }_{\cC}  
= \abs{G[2^{n+1} a]} + \abs{G}^{1/2} \nu_{2^{n+1} a}(m_{\cC} )
= \abs{G_o} ( \abs{G[2^{n+1}]} + \abs{G_e}^{1/2} \abs{ G[2^n]}^{1/2} \psi_n).
\end{equation*}
So $\abs{L_{2^{n+1} a, 1} }_{\cC}  = \abs{L_{2^{n+1} a, 1} }_{\cC'}$ implies
\begin{equation*}
\abs{G[2^{n+1}]} + \abs{G_e}^{1/2} \abs{ G[2^n]}^{1/2} \psi_n 
= \abs{G'[2^{n+1}]} + \abs{G'_e}^{1/2} \abs{ G'[2^n]}^{1/2} \psi'_n. 
\end{equation*}
If $\psi_n = \psi'_n = 0$, then the above equation would imply 
$\abs{G[2^{n+1}]} = \abs{G'[2^{n+1}]}$. So $\psi_n \neq 0$ or $\psi'_n \neq 0$.  
Rearranging the above equation and remembering that 
$\abs{G_e} = \abs{G_e'}$,  we get
\begin{equation}
\abs{G[2^{n+1}]} - \abs{G'[2^{n+1}]} = 
 \abs{G_e}^{1/2} \abs{ G[2^n]}^{1/2} (\psi'_n  - \psi_n). 
 \label{e-eq-in-cyc}
\end{equation}
Each side of equation \eqref{e-eq-in-cyc} belong to $\ZZ[e^{2 \pi i /8}]$. Consider the absolute norm of each side. 
If $\psi \in \mu_8 \cup \lbrace 0 \rbrace$, one verifies that the absolute norm of $(\psi - 1)$ is a power of $2$ or zero.
For example if $\psi$ is a primitive $8$-th root of unity, then 
$
N^{\QQ[\psi]}_{\QQ}(\psi - 1) =  \prod_{j = 0}^3 (\be(\tfrac{2 j + 1}{8}) - 1)  = 2.
$
If $\psi_n \neq 0$ (resp. $\psi'_n \neq 0$)  then writing $(\psi'_n  - \psi_n) = \psi_n( \psi'_n/ \psi_n - 1)$  (resp.
$(\psi'_n  - \psi_n) = \psi'_n(1 -  \psi_n/ \psi'_n)$) we find that the norm of $( \psi'_n - \psi_n)$
is a power of $2$ or $0$.
So the norm of the right hand side of equation \eqref{e-eq-in-cyc} is also
a power of $2$. However, note that the left hand side is already an integer, so it must also be a power of $2$.
The only way this is possible is if $\abs{G[2^{n+1}] }= 2 \abs{G'[2^{n+1}]}$.
Write $\nu_{2^{n+1} } (m_{\cC} ) = \abs{G[2^n]}^{1/2} \lambda_n$ and
$\nu_{2^{n+1} } (m_{\cC'} ) = \abs{G'[2^n]}^{1/2} \lambda'_n$ for some 
$\lambda_n, \lambda'_n \in \mu_8 \cup \lbrace 0 \rbrace$.
Now the equality $\abs{L_{2^{n+1} , 1}}_{\cC} =  \abs{L_{2^{n+1} , 1}}_{\cC'}$ yields
\begin{equation*}
\abs{G'[2^{n+1}]} = 
\abs{ G[2^{n+1}] } - \abs{G'[2^{n+1}] } = \abs{G}^{1/2} \abs{G[2^n]}^{1/2} (\lambda'_n - \lambda_n).
\end{equation*}
Now the left hand side is a power of $2$, so the norm of the right hand side must also be a power of $2$.
Since $N( \lambda_n' - \lambda_n)$ is a power of $2$, it follows that $\abs{G}$ is also a power of $2$
which is against our assumption. It follows that $(G, \chi) \simeq (G', \chi')$. Now since $\nu_2(m_{\cC} ) = \op{sgn}(\tau)$,
the equality $\abs{L_{2,1}}_{\cC} = \abs{L_{2,1}}_{\cC'}$ implies $\tau = \tau'$.
 \end{proof}
\par
{\bf Example.}  We exhibit two Tambara Yamagami categories that have the same state-sum invariant for all lens spaces $L_{k,1}$.  
Recall that  $A_{2^n}$ denotes the metric group $((\ZZ/2^n \ZZ), x^2/2^{n+1})$.
For $k \in \ZZ$, we shall denote the pre-metric group $((\ZZ/2^n \ZZ), k x^2/2^{n+1})$ by 
 $(k \cdot A_{2^n})$.
 Let $(G_1, b_1) = (A_{2})^4  \perp A_{4} $ and $(G_2, b_2) = (A_{2})^2  \perp (A_{4})^2$.
Let $\cC_1 = \op{TY}(G_1, b_1, -\tfrac{1}{8})$ and $\cC_2 = \op{TY}(G_2, b_2, \tfrac{1}{8})$.  Then
we claim that 
$\abs{L_{n,1}}_{\cC_1} = \abs{L_{n,1}}_{\cC_2}$
for all positive integers $n$.
\begin{proof}[proof of claim]
Let $q_i$ be a quadratic form such that $\partial q_i = b_i$ for $i \in \lbrace 1,2 \rbrace$.  We will break the proof into cases
according to possible $2$-valuations of $n$.  
The trivial case is that $\abs{L_{n,1}}_{\cC_1} = \frac{1}{128} = \abs{L_{n, 1}}_{\cC_2}$ if $n$ is odd. 
By lemma \ref{l-ssi-ind} and lemma \ref{l-nu-and-Theta}, to prove 
$\abs{L_{2k,1}}_{\cC_1} = \abs{L_{2k,1}}_{\cC_2}$ it is enough to show that
\begin{equation*}
\abs{G_1[2k]} + (-1)^k 8 \xi_k(G, q_1) 
= \abs{G_2[2k]} + 8 \xi_k(G,q_2)
\end{equation*}
Since $\xi_k$ is multiplicative,
\begin{equation*}
\xi_k(G, q_1)
= \xi_k(A_2 )^4 \xi_k( A_4)  \text{\; \; and \; \;} 
\xi_k(G, q_2)
= \xi_k(A_2)^2 \xi_k(A_4)^2.
\end{equation*}
\par
From lemma \ref{l-gauss-sums-of-irreducible-forms}, we have 
$\xi_k(A_{2^r}) = \Theta(A_{2^r} )^k \Theta( -k \cdot A_{2^r}) = e(k/8) \Theta(-k \cdot A_{2^r} )$.
The values of $\Theta(-k \cdot A_{2^r} )$ were computed in lemma \ref{l-theta-of-scaled-form}. 
This lets us compute the invariants. We shall make three cases:
\newline
{\bf Case 1: } Suppose $k$ is odd. Then we have
$\Theta(-k \cdot A_{2} ) = (-1)^{(k^2 - 1)/8} \be(-k/8)$, so $\xi_k(A_2) = (-1)^{(k^2 - 1)/8}$. 
We have $\Theta(-k \cdot A_{4} ) = (-1)^{2(k^2 - 1)/8} \be(-k/8) = \be(-k/8)$, so $\xi_k(A_4) = 1$.
It follows that $\xi_k(G, q_1) = 1  = \xi_k(G, q_2)$.  
Since  $\abs{G_1[2k]} = 32$ and $\abs{G_2[2k]} = 16$, we get 
$\abs{L_{2k,1}}_{\cC_1} = \abs{L_{2k,1}}_{\cC_2}$ in this case.
\par
{\bf Case 2: } Suppose $v_2(k) = 1$ or $2$. Then $\Theta(-k \cdot  A_2) = 0$ or $\Theta(-k \cdot A_4) = 0$, so $\xi_k( A_2) = 0$ or $\xi_k(A_4) = 0$.
Since both $(G_1, b_1)$ and $(G_2, b_2)$ have components of type $A_2$ and $A_4$ and since $\xi_k$ is multiplicative, it follows that
$\xi_k(G, q_1) = \xi_k(G, q_2) = 0$.
Since $\abs{G_i[2k]} = 64$, we get 
$\abs{L_{2k,1}}_{\cC_1} = \abs{L_{2k,1}}_{\cC_2}$ in this case.
\par
{\bf Case 3: } Finally suppose $v_2(k) \geq 3$. Let $r = 1$ or $r = 2$.
Then $\Theta( A_{2^r})^k = e(k/8) = 1$. The quadratic form $-k \cdot A_{2^r}$ 
is identically equal to $1$, so $\xi_k(A_{2^r}) = \Theta(-k \cdot A_{2^r} ) = 2^{r/2}$. It follows that 
$\xi_k(G, q_j) = \abs{G}^{1/2} = 8$ for $j = 1, 2$.
Since $\abs{G_i[2k]} = 64$ and $(-1)^k = 1$, we get $\abs{L_{2k,1}}_{\cC_1} = \abs{L_{2k,1}}_{\cC_2}$ in this case too.
\end{proof}
%
%********************************************************************************
%********************************************************************************
%********************************************************************************
%********************************************************************************
%

%
%********************************************************************************
%********************************************************************************
%********************************************************************************
%********************************************************************************
%
\appendix
%
%********************************************************************************
%********************************************************************************
%********************************************************************************
%********************************************************************************
%
%
%********************************************************************************
%********************************************************************************
%
\section{Diagonalization of bilinear and quadratic forms}
\label{app-A}
%
%********************************************************************************
%********************************************************************************
%
In this appendix, we discuss the problem of decomposing quadratic and bilinear forms on finite abelian groups
into irreducible components. 
\newline
\par
{\bf Some notation: }
If $R$ is an abelian group, we let $M_n(R)$
be the set of all $n \times n$ matrices with entries in $R$. 
If $R$ be a commutative ring and $S$ is an $R$-module, then
$S^n$ is a (left) $M_n(R)$-module and $M_n(S)$ is a $M_n(R)$-bimodule. 
The action of $M_n(R)$ on $S^n$ is obtained by writing elements
of $S^n$ as column vectors and multiplying by the matrix on the left. 
The two actions of $M_n(R)$ on $M_n(S)$ are by left and right multiplication.
\newline
\par
Recall from section \ref{section-bq} that if $x$ is an element in a $p$-group of finite order, then we write $v_p(x) = - \log_p( \op{order}(x))$ and $v_p(0) = \infty$.
The lemma below is elementary. We leave the proof as an easy exercise.
\begin{lemma}
Let $p$ be a prime. Let $G$ be an abelian $p$-group.
\par
(a) Let $x \in G$ and $r \in \ZZ$. Then $r x = 0$ if and only if $v_p(r) + v_p(x)  \geq 0$.
\par
(b) If $x \in G$ and $r \in \ZZ$ such that $r x \neq 0$, then $ v_p(r) + v_p(x)= v_p(rx)$.
\par
(c) Let $x_1, x_2 \in G$. Then  $v_p( x_1 + x_2 ) \geq \op{min} \lbrace v_p(x_1) , v_p(x_2) \rbrace$
and equality holds if $\langle x_1 \rangle \cap \langle x_2 \rangle = 0$ or $v_p(x_1) \neq v_p(x_2)$.
(here and later, $\langle x \rangle$ denotes the cyclic subgroup generated by $x$)
\par
(d) Let $b$ be a symmetric bilinear form on a finite abelian $p$-group $G$.
If $g \in G$, then $v_p(g) \leq v_p( b(g, h))$ for all $h \in G$. Further, if $b$ is non-degenerate, then 
$v_p(g) = \min \lbrace v_p( b(g, h)) \colon h \in G \rbrace$.
\label{l-pval}
\end{lemma}
Decomposing symmetric bilinear forms into irreducible components is almost equivalent to diagonalizing matrices
by row and column operations. We introduce these operations next.
\newline
\par
{\bf Definition.} Let $E_{ij}$ be the $n \times n$ matrix whose $(i,j)$-th entry is $1$ and all other entries are $0$.
Let $I_n$ denote the $n \times n$ identity matrix.
Let $R$ be a commutative ring. Let $A$ be a $n \times n$ matrix with entries in some $R$-module $M$.
The operations $\op{Flip}_{i j}(A)$, $\op{Add}^{r,j}_{i}(A)$, and $\op{Scale}^r_i(A)$ defined below are called 
{\it row-column operations} on $A$:
\begin{itemize}
\item Let $\op{Flip}_{ij}(A) = S^{tr} A S$ where $S = I_n - E_{i i} - E_{j j} + E_{i j} + E_{j i}$.
This operation interchanges the $i$-th and $j$-th rows of $A$ and then interchanges the $i$-th and $j$-th columns of $A$.
\item Let $\op{Add}^{r,j}_i(A) =  S^{tr} A S$, where $S = I_n + r E_{ji}$ for some $r \in R$ and $i \neq j$.
This operation  adds $r$ times the $j$-th row of $A$ to the $i$-th row of $A$ and then adds $r$ times the
$j$-th column of $A$ to the $i$-th column of $A$.
\item Let $\op{Scale}^r_i(A) = S^{tr} A S$ where $S = I_n + (r - 1) E_{i i}$ for some $r \in R$. This operation multiplies
the $i$-th row of $A$ by $r$ and then multiplies the $i$-th column by $r$.
\end{itemize}
Let $(G, b)$ be a discriminant form and $(e_1, \dotsb, e_n) \in G^n$. For each $i \neq j$,
the operation $\op{Flip}_{i j}$ converts $\op{Gram}_b( e_1,...,e_n)$ to $\op{Gram}_b(f_1,...,f_n)$ where 
$f_j = e_i$, $f_i = e_j$  and $f_k = e_k$ for $k \notin \lbrace i, j \rbrace$. The operation $\op{Add}^{r,j}_i$ 
converts $\op{Gram}_b( e_1,...,e_n)$ to $\op{Gram}_b(f_1,...,f_n)$ where 
$f_i = e_i + r e_j$ and $f_k = e_k$ for $k \neq i$.
The operation $\op{Scale}^r_i$
converts $\op{Gram}_b( e_1,...,e_n)$ to $\op{Gram}_b(f_1,...,f_n)$ where 
$f_i = r e_i $ and $f_k = e_k$ for $k \neq i$.
We shall say that a row-column operation on $\op{Gram}_b( e_1,...,e_n)$ 
is {\it valid} if $G = \oplus_k \langle e_k  \rangle$ implies $G = \oplus_k \langle f_k \rangle$.
Clearly, $\op{Flip}_{i j}$ is always valid. The operation $\op{Scale}^r_i$ is valid if $r$ is relatively
prime to the exponent of $G$. Lemma \ref{l-abgg} lets us decide when 
$\op{Add}^{r,i}_j$ is valid.
\begin{lemma}
Let $G$ be a finite abelian group and $e_1,\dotsb, e_n \in G$ such that $G = \oplus_k \langle e_k \rangle$.
Let $f_1, \dotsb, f_n \in G$ such that $\op{ord}(f_k) = \op{ord}(e_k)$ for all $k$ and $f_1, \dotsb, f_n$
generate $G$. Then there exists $\phi \in \op{Aut}(G)$ such that $\phi(e_k) = f_k$. In particular,
$G = \oplus_k \langle f_k \rangle$.
\label{l-abgg}
\end{lemma}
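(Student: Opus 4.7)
The plan is to build $\phi$ as a group homomorphism $G \to G$ determined by $\phi(e_k) = f_k$ and then show that it is an isomorphism by a counting argument.

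First I would define $\phi$ using the direct sum decomposition $G = \oplus_k \langle e_k \rangle$. For each $k$, since $\op{ord}(f_k) = \op{ord}(e_k)$, the assignment $a e_k \mapsto a f_k$ gives a well-defined homomorphism $\langle e_k \rangle \to \langle f_k \rangle \subseteq G$: any relation $a e_k = 0$ in $\langle e_k \rangle$ holds iff $\op{ord}(e_k) \mid a$, which by the hypothesis is equivalent to $\op{ord}(f_k) \mid a$, i.e. $a f_k = 0$. By the universal property of direct sums, these combine to a well-defined group homomorphism $\phi: G \to G$ with $\phi(e_k) = f_k$ for each $k$.

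Next I would verify $\phi$ is surjective: its image contains all the $f_k$, and by hypothesis these generate $G$. Since $G$ is finite, any surjective endomorphism is automatically injective, hence an automorphism. This gives the desired $\phi \in \op{Aut}(G)$. The "in particular" statement then follows because $\phi$ is an isomorphism that sends the internal direct sum decomposition $G = \oplus_k \langle e_k \rangle$ to $G = \oplus_k \langle f_k \rangle$ (autoisomorphisms preserve internal direct sum decompositions of subgroups when they map generators to generators of subgroups of equal order).

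The lemma is essentially formal; there is no real obstacle. The only subtle point is the well-definedness of $\phi$ on each cyclic factor, which is precisely why the hypothesis $\op{ord}(f_k) = \op{ord}(e_k)$ is exactly what is needed (rather than merely $\op{ord}(f_k) \mid \op{ord}(e_k)$, which would suffice for well-definedness but not for $\phi$ being an automorphism).
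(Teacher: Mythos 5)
Your proof is correct and follows essentially the same route as the paper's: define $\phi$ on each cyclic summand via $\op{ord}(f_k)=\op{ord}(e_k)$, glue by the universal property of the direct sum, get surjectivity from the generating hypothesis, and conclude injectivity from finiteness. The extra remarks on well-definedness and on why equality of orders (not just divisibility) is needed are fine elaborations but do not change the argument.
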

\begin{proof}
Let $n_k  = \op{ord}(e_k) = \op{ord}(f_k)$. Since $\langle e_k \rangle$ is a cyclic group of order $n_k$ and
$f_k$ is an element of order $n_k$ in $G$, there exist  a homomorphism $\phi_k : \langle e_k \rangle \to G$
given by $\phi_k( e_k) = f_k$. By universal property of
direct sum, there exists a homomorphism $\phi: G \to G$ such that $\phi(e_k) = f_k$ for all $k$. Since the $f_k$'s generate
$G$, the map $\phi$ is onto. Since $G$ is finite group, $\phi$ must be injective as well. 
\end{proof}
let $A \in M_n( \QQ_{(p)}/\ZZ)$. 
The proofs of the next two lemmas \ref{l-diagonalization} and \ref{l-diagonalization2} are based on the algorithm to
reduce $A$ to a diagonal matrix 
(or a block diagonal matrix with blocks of size at most two when $p =2$) by conjugation or equivalently using the
 elementary row-column operations introduced above. This paves the way to prove theorem \ref{t-wall} of \cite{W:QF}.
 Let $\op{diag}(a_1, \dotsb, a_n)$ denote the diagonal $n$ by $n$ matrix with diagonal entries $a_1, \dotsb, a_n$.
\begin{lemma} 
Let $p$ be an odd prime. Let $u_p$ be a quadratic non-residue modulo $p$.
Let $A \neq 0$ be a symmetric matrix in $M_n(  \QQ_{(p)}/\ZZ)$. Let $r_1$ be the smallest number such that $p^{r_1} A = 0$.
\par
(a) Then there exists a matrix $S \in \op{GL}_n(\ZZ)$ such that $S \bmod p \in \op{GL}_n( \ZZ/ p \ZZ)$ and 
\begin{equation*}
S^{tr} A S = \op{diag}( p^{-r_1} \epsilon_1, \dotsb, p^{-r_n} \epsilon_n), \text{\; with\;}
r_1\geq r_2 \geq \dotsb \geq r_n \geq 0, \; \epsilon_j \in \lbrace 1, u_p, 0 \rbrace, \; \epsilon_1 \neq 0.
\end{equation*}
\par
(b) Let $(G,b)$ be a non-degenerate discriminant form where $G$ is a $p$-group.
Let $G = \oplus_{j = 1}^n \langle e_j \rangle$.
 Then there exists $f_1, \dotsb, f_n \in G$ such that
$G = \oplus_{j = 1}^n \langle f_j \rangle$ and 
$\op{Gram}_b( f_1, \dotsb, f_n) = \op{diag}( p^{-r_1} \epsilon_1, \dotsb, p^{-r_n} \epsilon_n)$
with  $r_1\geq r_2 \geq \dotsb \geq r_n > 0$, 
$\epsilon_j \in \lbrace 1, u_p \rbrace$. 
\label{l-diagonalization}
\end{lemma}
\begin{proof} (a) One proceeds by finding a pivot with the smallest 
$p$-valuation and then using this pivot to sweep out the rows and columns. 
Let $A = ( \!( a_{i j} ) \!)  \in M_n( \QQ_{(p)}/\ZZ)$ be a symmetric non-zero matrix.
Let $r_1 > 0$ be the smallest integer such that $p^{r_1} A = 0$.
 By induction on $n$, it suffices to show that there is a sequence of row-column operations that converts $A$
to a matrix of the form $\bigl( \begin{smallmatrix}  d_{1} & 0 \\ 0 & A'  \end{smallmatrix} \bigr) $
where $d_1  = p^{-r_1}$ or $d_1 = u_p p^{-r_1}$ 
and $A'   \in M_{n-1}( \QQ_{(p)}/\ZZ)$ is a symmetric  matrix  such that $p^{r_1} A' = 0$.
\par
{\bf Finding a pivot:} {\it We claim that after changing $A$ by row-column operations, we may assume
that $a_{1 1} = p^{-r_1}$ or $a_{ 1 1}  = u_p p^{-r_1}$.}
\newline
{\it proof of claim: } If there is a diagonal entry $a_{i i}$ such that $v_p(a_{i i} ) = -r_1$, then apply $\op{Flip}_{1 i}$ to $A$
to get $v_p(a_{1 1} ) = -r_1$. 
Otherwise, there exists $i \neq j$ such that $v_p(a_{i j}) = -r_1$ and $v_{p} (a_{ii}) > -r_1 , v_p(a_{j j}) > - r_1$.
In this case, apply $\op{Add}_{i}^{1 , j}$ to $A$.
This changes the $(i,i)$-th entry of the matrix from $a_{i i}$ to $(a_{ i i}  + 2 a_{i j} + a_{j j})$ whose $p$-valuation is $-r_1$
\footnote{this is the step in the argument that fails for $p=2$.}.
Now, we apply $\op{Flip}_{1 i}$. Either way, we get $v_p(a_{1 1}) = -r_1$.
Using the operation $\op{Scale}^r_i$ we can change $a_{1 1}$ to $r^2 a_{1 1}$. By choosing $r$ appropriately, we
can make $a_{1 1} = p^{-r_1}$ or $a_{ 1 1}  = u_p p^{-r_1}$.
\par
{\bf Sweeping out: } Now $a_{1 1} = \epsilon_1  p^{-r_1}$ with $\epsilon_1 = 1$ or $u_p$.
Since $\epsilon_1$ is relatively prime to $p$, we can pick $\epsilon' \in \ZZ$ such that
$\epsilon' \epsilon_1 \equiv 1 \bmod p^{r_1}$. 
We can represent $a_{1i}$ in the form $\beta_i  p^{-r_1}$ with $\beta_i \in \ZZ$.
We add $(- \beta_i \epsilon')$ times the first row to the $i$-th row and then 
add $(- \beta_i \epsilon')$ times the first column to the $i$-th column to 
make $a_{1i} = 0$ and $a_{i1} = 0$. By performing this operation for $ i = 2, 3, \dotsb, n$
converts $A$ to a  matrix of the form
$\bigl( \begin{smallmatrix}  \epsilon_1 p^{-r_1} & 0 \\ 0 & A'  \end{smallmatrix} \bigr) $.
Finally note that the entries of $A'$ are $\ZZ$-linear combinations of entries of $A$, so $p^{r_1} A = 0$
implies $p^{r_1} A' = 0$. The row-column operations above correspond to conjugating $A$ by certain matrices
which are always invertible modulo $p$. Now part (a) follows by induction.
\par
(b) Assume the setup of part (b).  Let $A = \op{Gram}(e_1, \dotsb , e_n)$.
Part (a) shows that the matrix $A$ can be diagonalized by a sequence 
of row-column operations. Performing a row-column operation on $\op{Gram}_b(e_1,...,e_n)$
converts it to $\op{Gram}_b(f_1,...,f_n)$ where $f_j$'s are given in definition preceding lemma \ref{l-abgg}.
We  need to verify that all the row-column operation used in the proof of part (a) are valid
(see the definition preceding lemma \ref{l-abgg}).
%One only has to check the operations $\op{Add}^{r, i}_j$.
While finding the pivot,  we may perform $\op{Add}^{1,j}_i$ to a matrix $\op{Gram}(e_1,...,e_n)$ 
if a non-diagonal entry of the matrix, say $a_{i j}$, has the highest power of $p$
 in the denominator. Since $a_{i j} = a_{j i}$, lemma \ref{l-pval}(d) implies that 
$\op{order}(e_i)  =  \op{order}(e_j)$. Since $\langle e_i \rangle \cap \langle e_j \rangle = 0$,
lemma \ref{l-pval} implies that $\op{ord}(e_i + e_j) = \op{ord}(e_i)$.
Now lemma \ref{l-abgg} implies that $\op{Add}^{1,j}_j$ is valid.
\par
While sweeping out, we perform the row-column operation $\op{Add}^{-\beta_i \epsilon', 1}_i$ where 
$a_{1 i} = \beta_i p^{-r_1}$. This operation changes $\op{Gram}(e_1,...,e_n)$ to $\op{Gram}(f_1,...,f_n)$ where 
$f_i = e_i -\beta_i \epsilon' e_1$ and $f_k = e_k$ for $k \neq i$. Assume $G = \oplus_k \langle e_k \rangle$.
 Since the discriminant form on $G$ is non-degenerate, we have
$v_p(e_1) = -r_1$ and hence $v_p( -\beta_i \epsilon' e_1) = v_p(\beta_i) - r_1$.
Also, $v_p( e_i) \leq v_p(a_{1 i}) =  v_p(\beta_i) - r_1$. 
Since $\langle e_i \rangle \cap \langle  -\beta_i \epsilon' e_1 \rangle = \lbrace 0 \rbrace$, we have
$v_p( f_i ) = \min \lbrace v_p(e_i), v_p( -\beta_i \epsilon' e_1) \rbrace = v_p(e_i)$. Lemma
\ref{l-abgg} implies that the row-column operations performed while sweeping out are valid.
\par
It follows that there exists $f_1, \dotsb, f_n \in G$ such that $G = \oplus \langle f_j \rangle$ and 
$\op{Gram}_b( f_1, \dotsb, f_n) = \op{diag}( p^{-r_1} \epsilon_1, \dotsb, p^{-r_n} \epsilon_n)$
with  $r_1\geq r_2 \geq \dotsb \geq r_n \geq 0$, 
$\epsilon_j \in \lbrace 1, u_p, 0\rbrace$. Since $(G,b)$ it non-degenerate, it follows that we must have
$\epsilon_j \neq 0$ and $\op{order}(f_j) = p^{r_j}$ for all $j$.
\end{proof}
The next lemma handles the case of the prime $p = 2$. 
This proof is similar to the proof of lemma \ref{l-diagonalization}, but somewhat more complicated. 
We only elaborate on the modifications needed to the proof of lemma \ref{l-diagonalization}.
\begin{lemma} 
(a) Let $A \neq 0$ be a symmetric matrix in $M_n(  \QQ_{(2)}/\ZZ)$. Let $m$ be the smallest number such that $2^{m} A = 0$.
Then there exists a matrix $S \in \op{GL}_n(\ZZ)$ such that $(S \bmod 2) \in \op{GL}_n( \ZZ/ 2 \ZZ)$ and 
$S^{tr} A S$ is block diagonal with blocks of size $1$ or $2$. Each block is of the form
\begin{equation}
\bigl( 2^{-r}  \delta \bigr), \;\;\text{or}\;\; 2^{-r} \bigl( \begin{smallmatrix} 2 a & b \\ b & 2c \end{smallmatrix} \bigr)
\label{eq-nondegform2}
\end{equation}
where $r$ is some non-negative integer, $a, b, c$ are integers with $b$ odd and $\delta \in \lbrace 0, \pm 1, \pm 5 \rbrace$.
The largest $r$ that occurs is equal to $m$.
\par
(b) Let $(G,b)$ be a non-degenerate discriminant form where $G$ is a $2$-group.
Let $G = \oplus_{j = 1}^n \langle e_j \rangle$. Then there exists $f_1, \dotsb, f_n \in G$ such that
$G = \oplus_{j = 1}^n \langle f_j \rangle$ and 
$\op{Gram}_b( f_1, \dotsb, f_n)$ is a block diagonal matrix with with blocks of size one or two.
Each block is of the form given in \eqref{eq-nondegform2}
where $r$ is some positive integer, $a, b, c$ are integers with $b$ odd and $\delta \in \lbrace \pm 1, \pm 5 \rbrace$.
\label{l-diagonalization2}
\end{lemma}
\begin{proof}
(a) As above, we try to get a diagonal entry of $A$ to have minimum $2$-valuation.
If this succeeds, then we can proceed with the sweep out as before and split off a one-by-one block from
$A$. This procedure fails only in the situation when there exists $i \neq j$ such that
$ \bigl( \begin{smallmatrix} a_{i i} & a_{i j} \\ a_{j i} & a_{j j} \end{smallmatrix} \bigr) = 
2^{-m} \bigl( \begin{smallmatrix} 2 \alpha & \beta \\ \beta &  2 \gamma \end{smallmatrix} \bigr)$ with
$\alpha, \beta, \gamma \in \ZZ$, 
$\beta$ odd and all the diagonal entries of $A$ have $2$ valuation strictly larger than $-m$.
In this case, we can use row-column flips
 to move this $2 \times 2$ sub-matrix to the upper left corner of $A$ so that 
$\bigl( \begin{smallmatrix} a_{1 1} & a_{1 2} \\ a_{2 1} & a_{2 2}  \end{smallmatrix} \bigr)
=2^{-m} \bigl( \begin{smallmatrix} 2 \alpha & \beta \\ \beta &  2 \gamma \end{smallmatrix} \bigr)$ 
and then use this $2 \times 2$ block to sweep out the first two rows and first two columns simultaneously.
\par
This is how it is done: Suppose the first two entries of the $i$-th row are $2^{-m} (u,v)$ for $u, v \in \ZZ$
where $i > 2$. We want to find $r_1, r_2$ such that
\begin{equation*}
(r_1, r_2) 2^{-m} \bigl( \begin{smallmatrix} 2 \alpha & \beta \\ \beta &  2 \gamma \end{smallmatrix} \bigr)
=2^{-m} (u,v) \mod \ZZ.
\end{equation*}
This system can always be solved since the determinant $(4 \alpha \gamma - \beta^2)$ of the
coefficient matrix is odd. Solving the equation yields 
yields
\begin{equation*}
(r_1, r_2) = d ( 2 \gamma u - \beta v, 2 \alpha v - \beta u) 
\end{equation*}
where $d$ is an inverse of $(4 \alpha \gamma - \beta^2)$ modulo $2^m$.
Now we  add to the $i$-th row $-r_1$ times the first row and $-r_2$ times the second row and then perform
the corresponding column operations to the $i$-th column. Verify that after these operations the first two entries of the
$i$-th row and $i$-th column become zero. Part (a) follows.
\par
(b) Let $A = \op{Gram}(e_1, \dotsb, e_n)$.
The sweep out operation described above corresponds to replacing $\op{Gram}(e_1, \dotsb, e_n)$
by $\op{Gram}(f_1,\dotsb, f_n)$ where
$f_i = e_i + r_1 e_1 + r_2 e_2$ and $f_j = e_j$ for all $j \neq i$. The extra work needed in part (b)
is to check that this operation is valid. Note that since $2^m$ is the maximum denominator in $A$, 
$\op{order}(e_1) = \op{order}(e_2) = 2^m$. Suppose $\op{order}(e_i) = 2^k$. Then $u$ and $v$ must be
divisible by $2^{m - k}$ because the entries of the $i$-th row can have denominator at most $2^k$. 
From the formula for $r_1$ and $r_2$ we see that $2^{m - k}$ divides $r_1$ and $r_2$. It follows
that $2^k f_i = 0$. On the other hand, since $\langle e_i \rangle \cap \langle e_1, e_2 \rangle = 0$,
we have $\op{order}(f_i) \geq 2^k$. So $\op{order}(f_i) = \op{order}(e_i)$ and lemma \ref{l-abgg} implies the sweep out operations
using $2 \times 2$ blocks described above are valid.
\end{proof}
For $p$-groups with $p$ odd, Wall's theorem \ref{t-wall}(a) follows from lemma \ref{l-diagonalization}. For $p =2$, we need lemma
\ref{l-diagonalization2} and we also need the lemmas \ref{l-rank-two-irr-q} and \ref{l-rank-two-irr-b},
that describe the irreducible non-degenerate quadratic and bilinear forms on $(\ZZ/2^r \ZZ)^2$.
Proving lemmas \ref{l-rank-two-irr-q} and \ref{l-rank-two-irr-b} depends on solving a system of congruence equations
modulo $2^n$ for all $n$. This can be done by a  standard application of Hensel's lemma.
First we state Hensel's lemma in the form we need.
\begin{lemma}[Hensel's lemma]
Let $p$ be a prime.
Let $f_1, \dotsb, f_m \in \ZZ[x_1, \dotsb, x_n]$ and $f = (f_1, \dotsb, f_m)$.
Let $Df = (\!( \partial f_i/ \partial x_j )\!)$ be the Jacobian of $f$.
Let $t_1 \in \ZZ^n$ such that
$f(t_1) \equiv 0 \bmod p$ and the $m \times n$ matrix 
$(Df(t_1) \mod p)$ has rank $m$ over $\mathbb{F}_p$. 
Then, for all $k \geq 1$, there exists $t_k \in \ZZ^n$ such that $t_{k +1} \equiv t_k \bmod p^k$ 
and $f(t_k) \equiv 0 \bmod p^k$.
\end{lemma}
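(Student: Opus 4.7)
The plan is to prove the statement by induction on $k$, with the base case $k=1$ being given. The inductive step is the classical Newton-iteration argument: given $t_k$, search for $t_{k+1}$ in the coset $t_k + p^k \ZZ^n$ by solving a linear system modulo $p$.

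First, I would establish the key polynomial identity: for any $f_i \in \ZZ[x_1,\dotsb,x_n]$, any $t,s \in \ZZ^n$, and any $k \geq 1$, one has
\begin{equation*}
f_i(t + p^k s) \equiv f_i(t) + p^k \sum_{j=1}^n \tfrac{\partial f_i}{\partial x_j}(t) \cdot s_j \pmod{p^{2k}}.
\end{equation*}
This follows from the Taylor expansion of the polynomial $f_i$, since all monomials in $s$ of degree $\geq 2$ contribute a factor of $p^{2k}$, and the divided coefficients $\tfrac{1}{\alpha!}\partial^\alpha f_i$ still lie in $\ZZ[x]$ (binomial coefficients are integers). Collecting over $i$, this gives $f(t + p^k s) \equiv f(t) + p^k Df(t)\, s \pmod{p^{2k}}$. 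Since $k \geq 1$ forces $2k \geq k+1$, this congruence holds a fortiori modulo $p^{k+1}$.

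Now for the inductive step: suppose $t_k \in \ZZ^n$ satisfies $f(t_k) \equiv 0 \bmod p^k$. Write $f(t_k) = p^k c$ for some $c \in \ZZ^m$. We seek $s \in \ZZ^n$ such that $t_{k+1} := t_k + p^k s$ satisfies $f(t_{k+1}) \equiv 0 \bmod p^{k+1}$. By the identity above, this is equivalent to
\begin{equation*}
p^k\bigl( c + Df(t_k) s \bigr) \equiv 0 \pmod{p^{k+1}}, \quad \text{i.e.,} \quad Df(t_k)\, s \equiv -c \pmod{p}.
\end{equation*}
Since $t_k \equiv t_{k-1} \equiv \dotsb \equiv t_1 \bmod p$ (by iterating the hypothesis $t_{j+1} \equiv t_j \bmod p^j$ with $j \geq 1$), we have $Df(t_k) \equiv Df(t_1) \bmod p$. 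By assumption the $m \times n$ matrix $Df(t_1) \bmod p$ has rank $m$ over $\mathbb{F}_p$, so the linear map $\mathbb{F}_p^n \to \mathbb{F}_p^m$ given by $Df(t_k) \bmod p$ is surjective; therefore a solution $s \in \ZZ^n$ exists, and setting $t_{k+1} = t_k + p^k s$ finishes the induction.

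The only subtle point is the polynomial Taylor identity modulo $p^{2k}$, but for polynomials over $\ZZ$ this is entirely elementary (just expand each $f_i(t + p^k s)$ as a polynomial in $s$ and observe that every monomial of degree $\geq 2$ carries a factor of $p^{2k}$). Everything else is a routine linear-algebra lift over $\mathbb{F}_p$, facilitated by the full-rank hypothesis on $Df(t_1)$ — this is what guarantees solvability at each step and drives the whole argument.
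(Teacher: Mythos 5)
Your proof is correct. The paper omits its own proof of this lemma entirely (``The proof is omitted''), and your argument — induction on $k$ via the integral Taylor expansion $f(t+p^k s)\equiv f(t)+p^k Df(t)s \pmod{p^{2k}}$ followed by a surjectivity-of-$Df(t_1)\bmod p$ lifting step — is exactly the standard Newton-iteration argument the authors evidently had in mind, with all the relevant points (integrality of the divided derivatives, $t_k\equiv t_1\bmod p$, rank $m$ giving surjectivity onto $\mathbb{F}_p^m$) handled correctly.
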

The proof is omitted.
%
%\begin{proof}
%Proceed by induction on $k$. Suppose we have found $t_1, \dotsb, t_k$ as required.
%Then $t_j \equiv t_{j-1}$ for $j \leq k$ and $f(t_k) = p^k c_k$ for some $c_k \in \ZZ^n$.
%In particular, since $t_k \equiv t_1 \bmod p$, we have $Df(t_k) \equiv Df(t_1) \bmod p$.
%Let $t_{k+1} = t_k + \epsilon_k p^k$ where $\epsilon_k = (\epsilon_{k,1}, \dotsb, \epsilon_{k,n})$,
%with $\epsilon_{k,j} \in \lbrace 0, 1, \dotsb, p - 1 \rbrace$ are to be chosen later.
%Using first order Taylor expansion, we have
%\begin{equation*}
%f(t_{k+1}) = f(t_k) + p^k Df (t_k) \epsilon_k^{tr} + O(p^{2k}) \equiv 
%p^k c_k + p^k Df (t_k) \epsilon_k^{tr} \mod p^{k+1}.
%\end{equation*}
%So $f(t_{k+1}) \equiv 0 \bmod p^{k+1}$ if and only if
%$ Df (t_k) \epsilon_k^{tr} \equiv -c_k \bmod p$, or equivalently
%$ Df (t_1) \epsilon_k^{tr} \equiv -c_k \bmod p$. These linear equations
%can always be solved for $\epsilon_k$ if $( Df (t_1) \bmod p)$ has full row rank.
%\end{proof}
%
\begin{lemma}
(a) Let $s = \bigl( \begin{smallmatrix} s_{1 1} & s_{1 2} \\ s_{2 1} & s_{2 2} \end{smallmatrix} \bigr)$
be a $2 \times 2$ matrix of indeterminates. Let 
\begin{equation*}
(A(s) , B(s), C(s) )= 
(s_{1 1} ^2 + s_{1 1} s_{ 1 2} + s_{1 2} ^2, 
2 s_{1 1} s_{2 1} + s_{1 1} s_{2 2} + s_{2 1} s_{1 2} + 2 s_{1 2} s_{2 2}, 
s_{2 1} ^2 + s_{2 1} s_{2 2} + s_{2 2}^2).
\end{equation*}
Let $A, B, C$ be odd integers. Let $n \geq 1$. Then  the equation
\begin{equation}
(A(s), B(s), C(s)) \equiv (A, B, C) \bmod 2^n 
\label{eq-Fn}
\end{equation}
has a solution $S \in M_2(\ZZ)$ such that $S \equiv I \bmod 2$ (here $I$ denotes the $2 \times 2$ identity matrix).
\par
(b) Let $s = \bigl( \begin{smallmatrix} s_{1 1} & s_{1 2} \\ s_{2 1} & s_{2 2} \end{smallmatrix} \bigr)$
be a $2 \times 2$ matrix of indeterminates. Let 
\begin{equation*}
(A(s), B(s), C(s) ) = (s_{1 1} s_{ 1 2},  s_{1 1} s_{2 2} + s_{2 1}s_{1 2} ,  s_{2 1} s_{2 2}) .
\end{equation*}
Let $A, B, C$ be integers such that $B$ is odd and $AC$ is even. Let $n \geq 1$. Then  the equation
\begin{equation}
(A(s), B(s), C(s)) \equiv (A, B, C) \bmod 2^n 
\label{eq-En}
\end{equation}
has a solution $S \in M_2(\ZZ)$ such that $S \equiv \bigl( \begin{smallmatrix} A & 1 \\ 1 & C \end{smallmatrix} \bigr) \bmod 2$.
\label{l-congruences}
\end{lemma}
\begin{proof}
(a) Apply Hensel's lemma to $f = (f_1, f_2, f_3)$ where
$f_1(s)  = s_{1 1} ^2 + s_{1 1} s_{ 1 2} + s_{1 2} ^2 - A$,
$f_2(s) = 2 s_{1 1} s_{2 1} + s_{1 1} s_{2 2} + s_{2 1} s_{1 2} + 2 s_{1 2} s_{2 2} - B$,
$f_3(s) = s_{2 1} ^2 + s_{2 1} s_{2 2} + s_{2 2}^2 - C$.
Since $A, B, C$ are odd, $ s = \op{I}$ is a solution to $f(s) \equiv 0 \bmod 2$.
One computes 
\begin{equation*}
Df = \begin{pmatrix}
2 s_{1 1} + s_{1 2} &    0        &   s_{1 1} + 2 s_{1 2} & 0 \\
2 s_{2 1} + s_{2 2} &  2s_{1 1} + s_{1 2} &   s_{2 1} + 2 s_{2 2}  & s_{1 1} + 2 s_{1 2} \\
0         & 2 s_{2 1} + s_{2 2} &  0             & s_{ 2 1} + 2 s_{2 2}
\end{pmatrix},
\text{so \;} 
Df(I)  \equiv \begin{pmatrix}
0 &  0  & 1 & 0 \\
1 &  0 &  0 & 1 \\
0 & 1  &  0 & 0
\end{pmatrix} \bmod 2
\end{equation*}
which has rank 3. For part (b), let 
$f_1(s) = s_{1 1} s_{ 1 2} - A$, $f_2(s)= s_{1 1} s_{2 2} + s_{2 1}s_{1 2} - B $, $f_3(s) =  s_{2 1} s_{2 2} - C$.
Since $B$ is odd and $A C$ is even, $s_* = \bigl( \begin{smallmatrix} A & 1 \\ 1 & C \end{smallmatrix} \bigr)$
satisfies $f(s_*) \equiv 0 \bmod 2$.
One computes
\begin{equation*}
Df =  
\begin{pmatrix}
s_{1 2} &    0        &   s_{1 1}  & 0 \\
s_{2 2} &  s_{1 2} &   s_{2 1}   & s_{1 1}  \\
0         & s_{2 2} &  0             & s_{ 2 1} 
\end{pmatrix},
\text{\; so \;} 
Df(s_*)  \equiv 
\begin{pmatrix}
1 & 0  &   A & 0 \\
C &  1 &   1 & A \\
0  & C &   0   & 1
\end{pmatrix} \bmod 2.
\end{equation*}
Since $A$ or $C$ is even, either the second or the third column of the above matrix is equal to $(0, 1, 0)^{tr}$. So the matrix $(Df(s_*) \bmod 2)$ has rank $3$.
\end{proof}
\begin{proof}[proof of lemma \ref{l-rank-two-irr-q}]
(a) Note that $2q(x) = \partial q(x,x) \in 2^{-r} \ZZ/ \ZZ$. So $q(x)$ takes values in $2^{-r -1} \ZZ / \ZZ$, and
\begin{equation*}
q(x_1 ,x_2) = 2^{-r - 1}(\alpha x_1^2 + 2 B x_1 x_2 + \gamma x_2^2)
\end{equation*}
 where $q(1,0) =  2^{-r - 1} \alpha $, $ q(0,1) = 2^{-r - 1} \gamma $ and $\partial q( (1,0), (0,1)) = 2^{-r} B$.
Suppose $\alpha$ is odd. Let $\bar{\alpha}$ be an inverse of $\alpha$ modulo $2^{r +1}$.
Then we can complete squares to write
\begin{equation*}
q(x_1 , x_2) = 2^{-r - 1} ( \alpha ( x_1 + B \bar{\alpha} x_2)^2 + (\gamma - B^2 \bar{\alpha}) x_2^2). 
\end{equation*}
This contradicts the irreducibility of $q$, and thus $\alpha$ has to be even. For the same reason $\gamma$ has to be even. 
So we can write
\begin{equation*}
q(x_1 ,x_2) = 2^{-r} (A x_1^2 +  B x_1 x_2 + C x_2^2).
\end{equation*}
If $A$, $B$, $C$ are all even, then $\partial q$ takes values in $2^{-r + 1} \ZZ/ \ZZ$
and hence cannot be non-degenerate. 
If $B$ is even,  then $A$ or $C$ must be odd,
and we can once again complete squares (as above) and decompose $(G,q)$ into orthogonal direct sum of
two metric groups.  So $B$ must be odd. 
\par
First, suppose $AC$ is odd. Let $F(x_1, x_2) = x_1^2 + x_1 x_2 + x_2^2$. Let 
$s = \bigl( \begin{smallmatrix} s_{1 1} & s_{1 2} \\ s_{2 1} & s_{2 2} \end{smallmatrix} \bigr)$.
Note that 
\begin{equation*}
F((x_1, x_2) s) = A(s) x_1^2 + B(s) x_1 x_2 + C(s) x_2^2
\end{equation*}
where $(A(s), B(s), C(s))$ are the polynomials given in lemma \ref{l-congruences}(a). 
We want to show $q(x_1, x_2) \simeq 2^{-r} F(x_1, x_2)$. This is equivalent to finding a matrix $s \in M_2(\ZZ)$
with odd determinant such that 
\begin{equation*}
F( (x_1, x_2) s )  \equiv (A x_1^2 + B x_1 x_2 + C x_2^2) \bmod 2^r,
\end{equation*}
or equivalently, $(A(s), B(s), C(s)) \equiv (A, B, C) \bmod 2^r$.
The proof follows from lemma \ref{l-congruences}(a), if $AC$ is odd.
If $AC$ is even, then the proof is identical, using  $F(x_1, x_2) =x_1 x_2$
and using part (b) of lemma \ref{l-congruences} instead of part (a).
\end{proof}
\begin{lemma}
(a) Let $A, B, C$ be odd integers. Let $r \geq 1$. Then there exists a matrix $S \in M_2(\ZZ)$ such that
$S \bigl( \begin{smallmatrix} 2 & 1 \\ 1 & 2 \end{smallmatrix} \bigr) S^{\op{tr}} \equiv 
\bigl( \begin{smallmatrix} 2A & B \\ B & 2C \end{smallmatrix} \bigr) \bmod 2^r$
and $S \equiv I \bmod 2$.
 \par
 (b) Let $A, B, C$ be integers such that $A C$ is even and $B$ is odd.
 Let $r \geq 1$. Then there exists a matrix $S \in M_2(\ZZ)$ such that
$S \bigl( \begin{smallmatrix} 0 & 1 \\ 1 & 0 \end{smallmatrix} \bigr) S^{\op{tr}} \equiv 
\bigl( \begin{smallmatrix} 2A & B \\ B & 2C \end{smallmatrix} \bigr) \bmod 2^r$
and $S \equiv  \bigl( \begin{smallmatrix} A & 1 \\ 1 & C \end{smallmatrix} \bigr) \bmod 2$. 
\label{l-rank-two-irr-b}
\end{lemma}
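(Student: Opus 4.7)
The plan is to apply Hensel's lemma to solve the polynomial system that arises from $S \mathcal{M} S^{\op{tr}} = N$ over the $2$-adic integers $\ZZ_2$; reducing the $2$-adic solution modulo $2^r$ then yields the required integer matrix $S$. In both (a) and (b), this matrix equation unpacks into three polynomial equations (by symmetry) in the four entries of $S$; being underdetermined, the system is best attacked by fixing one entry of $S$ suitably and solving for the remaining three via a cascade of single-variable Hensel lifts.

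For part (a), writing $S = \bigl(\begin{smallmatrix} x & y \\ z & w \end{smallmatrix}\bigr)$ and $\mathcal{M} = \bigl(\begin{smallmatrix} 2 & 1 \\ 1 & 2 \end{smallmatrix}\bigr)$, the equation $S \mathcal{M} S^{\op{tr}} = \bigl(\begin{smallmatrix} 2A & B \\ B & 2C \end{smallmatrix}\bigr)$ expands to
\begin{equation*}
x^2 + xy + y^2 = A, \qquad z^2 + zw + w^2 = C, \qquad 2xz + xw + yz + 2yw = B.
\end{equation*}
I would set $w = 1$; then the second equation becomes $z^2 + z + 1 = C$, which has value $1 \equiv C \pmod 2$ at $z = 0$ (since $C$ is odd) and odd derivative $2z+1$ there, so Hensel's lemma yields a unique $z \in 2\ZZ_2$ solving it exactly. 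The third equation is then linear in $x$ with unit coefficient $2z+1$, expressing $x$ as an affine function of $y$; substituting into the first equation produces a quadratic in $y$ whose derivative at $y = 0$ is odd and whose constant term vanishes modulo $2$ (both using $A$ and $B$ odd), so Hensel's lemma delivers a unique $y \in 2\ZZ_2$. One checks that the resulting $x$ satisfies $x \equiv 1 \pmod 2$, so altogether $S \equiv I \pmod 2$.

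For part (b), with $\mathcal{M} = \bigl(\begin{smallmatrix} 0 & 1 \\ 1 & 0 \end{smallmatrix}\bigr)$, the matrix equation becomes $xy = A$, $zw = C$, $yz + xw = B$. Setting $y = 1$ forces $x = A$ in $\ZZ_2$; eliminating $z = B - Aw$ reduces the second equation to the quadratic $Aw^2 - Bw + C = 0$. Its derivative $2Aw - B$ is a $2$-adic unit since $B$ is odd, and a quick case analysis (using $AC$ even and $B$ odd) shows that $w \equiv C$ is always a root modulo $2$; Hensel's lemma therefore produces a unique $w \in \ZZ_2$ with $w \equiv C \pmod 2$. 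The parity $z = B - Aw \equiv 1 \pmod 2$ follows in each case, so $S \equiv \bigl(\begin{smallmatrix} A & 1 \\ 1 & C \end{smallmatrix}\bigr) \pmod 2$ as required.

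The main obstacle lies in identifying an ansatz (which entry to fix, and at what value) so that (i) the remaining equations cascade into single-variable Hensel problems with nonzero derivative modulo $2$, and (ii) the required parities of $S$ emerge automatically from the lifts. Simpler choices, such as insisting that $S$ be diagonal or lower-triangular in (a), fail because they force unnecessary constraints like $x^2 \equiv A \pmod 8$, which do not hold for arbitrary odd $A$; the choices $w = 1$ and $y = 1$ above are precisely tailored to avoid these obstructions.
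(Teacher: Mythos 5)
Your argument is correct, and it is the same basic strategy as the paper's: both proofs expand $S\mathcal{M}S^{\op{tr}}$ into the three polynomial equations you write down, seed them with the same mod-$2$ approximations ($S\equiv I$ in (a), $S\equiv \bigl(\begin{smallmatrix} A & 1 \\ 1 & C\end{smallmatrix}\bigr)$ in (b)), and lift by Hensel's lemma. The difference is in how Hensel is invoked. The paper (via its lemma on the congruences \eqref{eq-Fn} and \eqref{eq-En}) applies a multivariate Hensel lemma directly to the underdetermined system in all four entries of $S$, verifying that the $3\times 4$ Jacobian has rank $3$ over $\mathbb{F}_2$ at the seed; you instead make the system determined by the ansatz $w=1$ (resp.\ $y=1$) and run a cascade of single-variable lifts, checking at each stage that the derivative is a $2$-adic unit and that the parities propagate correctly ($x\equiv B$, $y\equiv 0$ in (a); $z\equiv B$, $w\equiv C$ in (b) using $AC$ even). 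Your version is more elementary in that it needs only the one-variable form of Hensel's lemma and makes the parity claims explicit, at the cost of the bookkeeping of the substitution; the paper's version is slightly slicker and, more importantly, the same Jacobian computation is reused to prove the companion statement about quadratic forms (lemma \ref{l-rank-two-irr-q}), which your determined-system reduction does not automatically cover. Both are complete proofs of the statement as given.
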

\begin{proof}
(a) The congruences in part (a) translate into $A(s) \equiv A \bmod 2^{r-1}, B(s) \equiv B \bmod 2^r, C(s) = C \bmod 2^{r-1}$
where $A(s), B(s), C(s)$ are as in lemma \ref{l-congruences} (a). Part (a) follows from lemma \ref{l-congruences}.
Similarly part (b) follows from part (b) of lemma \ref{l-congruences}.  
\end{proof}
\begin{proof}[Proof of theorem \ref{t-wall}]
(a) Let $(G, b)$ be a non-degenerate discriminant form. 
It suffices to decompose $(G,b)$ into irreducibles when $G$ is a $p$-group for some prime $p$. First suppose
$p$ is odd.  
From lemma \ref{l-diagonalization}, it follows that there exists $f_1, \dotsb, f_n \in G$ such that $G = \oplus \langle f_j \rangle$ and 
$\op{Gram}_b( f_1, \dotsb, f_n) = \op{diag}( p^{-r_1} \epsilon_1, \dotsb, p^{-r_n} \epsilon_n)$
with  $r_1\geq r_2 \geq \dotsb \geq r_n \geq 0$, 
$\epsilon_j \in \lbrace 1, u_p \rbrace$. Since $(G,b)$ it non-degenerate, it follows that we must have $\op{order}(f_j) = p^{r_j}$ for all $j$.
Thus $(G, b)$ is orthogonal direct sum of the rank one discriminant forms $(\langle f_j \rangle, b \vert_{\langle f_j \rangle} )$
and each of these are of type $A$ or $B$. This completes the argument for odd $p$.
\par
Now we consider the case $p = 2$. From lemma \ref{l-diagonalization2}, it follows that there exists
$f_1, \dotsb, f_n \in G$ such that $G = \oplus \langle f_j \rangle$ and 
$\op{Gram}_b( f_1, \dotsb, f_n)$ is block diagonal with blocks of size one or two as given in lemma
\ref{l-diagonalization2}. Accordingly $(G,b)$ is an orthogonal direct sum of rank one or two discriminant
forms spanned by one or two of the $f_j$'s. The rank one forms among these are clearly of type $A$, $B$, $C$
or $D$. The Gram matrix of a rank two piece has the form
$2^{-r} \bigl( \begin{smallmatrix} 2 a & b \\ b & 2 c \end{smallmatrix} \bigr)$. 
Lemma \ref{l-rank-two-irr-b} shows that such a rank two piece is either of type $E$ or $F$.
%
%Alternative argument: Then  $G = \langle f \rangle \oplus \langle f' \rangle$
%with $f = f_j $, $f' = f_{j +1}$ and $\op{gram}(f, f') = 2^{-r} \bigl( \begin{smallmatrix} 2 a & b \\ b & 2 c \end{smallmatrix} \bigr)$
%with $a, b, c \in \ZZ$ and $b$ odd. Since $(G', b')$ is non-degenerate, it follows that $\op{order}(f) = \op{order}(f') = 2^r$,
%so $G' = (\ZZ/2^r \ZZ)^2$. Consider the quadratic form $q(x_1, x_2)  = 2^{-r} (a x_1^2 + b x_1 x_2 + c x_2^2)$. 
%One verifies that $b_q = b'$. We have seen that $(G', q)$ is either of type $E$ or $F$. Accordingly $(G, b_q)$
%is also of type $E$ or $F$. This proves part (a).
\par
(b) Let $(G, q)$ be a metric group. By part (a), $(G, \partial q)$ is an orthogonal direct sum of irreducible forms $(G_j, b_j)$.
Each $G_j$ is a homogeneous $p$-group of rank $1$ or $2$.
Further $G_j$ can have rank two only if $p = 2$. It follows that $(G, q)$ is also an orthogonal
direct sum of $(G_j, q_j)$ where $q_j  = q \vert_{G_j}$. The rank one forms are clearly of type $A$, $B$, $C$ or $D$.
The rank two forms either decompose into two rank one forms or they are irreducible as metric groups. In the later
case, lemma \ref{l-rank-two-irr-q} shows that $(G_j, q_j)$ is of type $E$ or $F$.
\end{proof}
%
%
%********************************************************************************
%********************************************************************************
%
%
%
%
%********************************************************************************
%********************************************************************************
%********************************************************************************
%********************************************************************************
%

\end{document}